\documentclass[10pt,a4paper]{article}

\usepackage{amsmath,amssymb,amsfonts,amsthm,latexsym,amscd}
\usepackage[english]{babel}
\usepackage{graphics}
\usepackage{color}
\usepackage{framed}
\usepackage[mathscr]{eucal}   	

\usepackage{hyperref}
\usepackage[all,2cell]{xy} \UseAllTwocells 		



\pagestyle{plain}        

\newcommand{\black}{\color{black}}

\newcommand{\B}{\mathcal{B}}

\newcommand{\G}{\mathcal{G}}
\renewcommand{\H}{\mathcal{H}} 	
\newcommand{\K}{\mathcal{K}} 
\newcommand{\M}{\mathcal{M}}
 
\newcommand{\U}{\mathcal{U}}
\newcommand{\V}{\mathcal{V}}

\newcommand{\Ss}{{\mathscr{S}}}

\newcommand{\bbc}{{\mathbb{C}}}

\DeclareSymbolFont{rsfs}{U}{rsfs}{m}{n}
\DeclareSymbolFontAlphabet{\scr}{rsfs}

\newcommand{\esspal}{G_{ep}}

\DeclareMathOperator{\InnAut}{InnAut_{\pi}}
\DeclareMathOperator{\SpatialAut}{SpatialAut_{\pi}}
\DeclareMathOperator{\Homeo}{Homeo}
\DeclareMathOperator{\Diff}{Diff}
\DeclareMathOperator{\Bis}{Bis}
\DeclareMathOperator{\Ad}{Ad}
\DeclareMathOperator{\ls}{clspan}

\DeclareMathOperator{\kernel}{ker}
\DeclareMathOperator{\rank}{rank}

\DeclareMathOperator{\Bisherm}{Bis_{\mathrm{herm}}}

\newtheorem{theorem}{Theorem}[section]				
\newtheorem{proposition}[theorem]{Proposition}

\newtheorem{lemma}[theorem]{Lemma}

\theoremstyle{definition}        
\newtheorem{definition}[theorem]{Definition}
\newtheorem{remark}[theorem]{Remark} 

 \newtheorem{example}[theorem]{Example} 
  \newtheorem{examples}[theorem]{Examples}
 \newtheorem{comment}[theorem]{Comment}

\setlength{\textwidth}{410pt}				

\setlength{\hoffset}{-1cm}
\setlength{\voffset}{-2.3cm}

\title{C*-bundle dynamical systems}

\author{\normalsize  
Rachel A.D. Martins 
\\
\normalsize  \textit{Centro de An\'alise Matem\'atica e Sistemas Din\^amicos, Departamento de
Matem\'atica,}
\\
\normalsize \textit{Instituto Superior T\'ecnico, Universidade T\'ecnica de Lisboa,}
\\
\normalsize \textit{Av.~Rovisco Pais 1, 1049-001 Lisboa, Portugal}
 \thanks{Research supported by Funda\c{c}$\tilde{\mathrm{a}}$o   para
as Ci\^encias e a Tecnologia (FCT)
including programs POCI 2010/FEDER and SFRH/BPD/32331/2006. Email: rmartins-at-math.ist.utl.pt}}

\begin{document}

\maketitle

\begin{abstract}
C*-bundle dynamical systems are introduced and their r\^ole within the theory of
C*-subalgebras
and Fell bundles is investigated. A C*-bundle dynamical system involves an action of a 1-parameter
group of ``spatial
automorphisms'' of the C*-bundle together with a notion of covariance with respect to the
diffeomorphisms of the base manifold, and turns out to define a class of examples of Arveson's
$A$-dynamical systems. An embedding invariant for non-commutative C*-subalgebras (equivalent to a
groupoid 2-cocycle by construction) emerges and has presentation analogous to a partition function
towards a potential algebraic formulation of quantum gravity.
\end{abstract}

\section{Introduction}

Many physicists including Heisenberg, have highlighted the relevance of
non-commutative configuration C*-algebras $A$ as subalgebras in a C*-algebra of observables $B$, so it
is important to establish a good characterisation of the embeddings of not necessarily
abelian C*-algebras $A$ in ambient C*-algebras $B$. Fell bundles as groupoid extensions to C*-bundles
can provide a rich context for addressing problems in non-commutative geometry and physics. 

 \medskip
 
This paper is a byproduct of a program of research motivated by attempts at translating certain ideas
from spin foam quantum gravity into C*-algebraic language. Some of the new definitions emerged in part
thanks
to discussions with Paolo Bertozzini, Roberto Conti and Pedro Resende. There is no single main theorem,
instead we present 5 smaller puzzle pieces theorems, plus a proposition and 2 lemmas, which all serve
to address and illustrate the r\^ole of C*-bundle dynamical systems within the theory of C*-subalgebras
and Fell bundles over groupoids, always in the light of their applications to physics.

\medskip

We open with a review of Banach bundles, C*-bundles (or C*-algebra bundles), Fell bundles over
groupoids with examples for the theoretical physicist unfamiliar with these topics. We follow those
preliminary ideas with a study of a suitable notion of self-map for a C*-bundle $(E^0, \pi, X)$ over a
locally compact space $X$ and relate them to unitary representations of $\Homeo(X)$ on $E^0$. Next, we
show how those notions relate to Kumjian's normalisers for embeddings of $C^(E^0)$ as a C*-subalgebra in
$C^*(E)$. ($(E^0,\pi,X)$ is the restriction to $X = G^0$ of a Fell bundle $(E,\pi,G)$ over a groupoid
given by $G = X \times X$.)  For coherence and for the convenience of the reader we recall a few
background details about C*-subalgebra theory as developed by Kumjian, Renualt and Exel and discuss
generalisations to non-commutative C*-subalgebras.\footnote{These C*-subalgebras in C*-algebras are
commonly denoted by $B$ for the subalgebra and $A$ for the ambient algebra. We choose the opposite
notation convention here: $A \subset B$, because of the connections to be made later to other topics
also involving C*-subalgebras. We hope that this does not cause an inconvenience to readers familiar
with the $B \subset A$ convention.} Then we introduce C*-dynamical systems and give a precise
comparison with Arveson's $A$-dynamical systems (and the more general notion of C*-dynamical system)
discussing the advantages to physics of C*-bundle dynamical systems. We construct an ``embedding
invariant'' $\Phi_{\hookrightarrow}$ to be equivalent to a groupoid 2-cocycle $\omega$ and it has an
alternative presentation in a form which is analogous to a partition function for quantum gravity. We
presume that this is the first appearance of such a partition function towards an algebraic formulation
of quantum gravity. Finally, we show how a C*-bundle dynamical system can provide a bridge or
enveloping structure between $(E,\pi,G)$ for a principal groupoid $G$ and $(A,B)$ for $A$ a diagonal
C*-subalgebra \cite{C*diag} in $B$.

\medskip

Additional potential applications to physics may include: (i) The (strongly continuous) action
taking part in a C*-bundle dynamical system is a lifting of a 1-parameter group of diffeomorphisms of
the base differential manifold $M$. Moreover, the construction is similar to ideas about modular
quantum gravity first appearing in \cite{BCL mst}. (ii) The covariance condition that applies to a
C*-bundle dynamical system may have an application to the Poincar\'e covariance axiom from
algebraic quantum field theory. (iii) Although in this paper we only work with the reversible
case, irreversible C*-bundle dynamical systems should have applications to decoherence theories,
entanglement and to topological inverse semigroups such as those describing Penrose tilings. (iv) The
generator of the 1-parameter group of C*-bundle spatial automorphisms (over a simply connected compact
manifold) should lead to an interpretation of a Dirac operator in a spectral triple as a
generalised connection on a C*-bundle. There should also be a clear relation between the
$A$-bimodules of 1-forms $\Omega_D^1(A)$ in a finite spectral triple $(A,\H,D)$, and both Arveson paths
and Exel slices. We leave these geometrical considerations for further work.

\section{Fell bundles and C*-bundles}

In this section we aim to include a comprehensive set of preliminary definitions for completeness
and accessibility, above all to aid physicists who may be unfamiliar with these topics
but are interested in the applications of the results presented below. 

\medskip

For Dixmier, Fell and many others, Banach bundles and C*-bundles are not equivalent to vector bundles 
with additional structure:

\begin{definition}\cite{Fell Doran}
A \emph{Banach bundle} ($E, \pi, X$) is a surjective continuous open map $\pi: E \rightarrow X$ 
such that
$\forall x \in X$ the fibre 
$E_x:=\pi^{-1}(x)$ 
is a complex Banach space and satisfies the following additional conditions:

\begin{itemize}
\item 
the 
operation 
of addition $+ : E \times E \rightarrow E$ is continuous on the set 

$E \times_{X} E := \{ (e_1,e_2) \in E \times E \ | \ \pi(e_1) = \pi(e_2) \}$,
\item 
the operation of multiplication by scalars: $\bbc \times E \rightarrow E$ is continuous,
\item 
the norm 
$\parallel \cdot \parallel : E \rightarrow \mathbb{R}$ is continuous,
\item 
for all $x_0 \in X$, the family $U_{x_0}^{\mathcal{O},\epsilon} = \{e \in E \ | \ \parallel e \parallel
<
\epsilon, \pi(e) \in \mathcal{O} \}$ where 
$\mathcal{O} \subset X$ is an open set containing $x_0 \in X$ and $\epsilon > 0$ is a fundamental
system of
neighbourhoods of 
$0 \in E_{x_0}$.    
\end{itemize}
For a \emph{Hilbert bundle} we require that for all $x \in X$, the fibre $E_x$ is a Hilbert space. 
\end{definition}

Modulo our notation, the current paragraph of remarks is taken from \cite{Fell Doran}. Let us say that a
Banach bundle $E$ is locally trivial if for every $x \in X$ there is a
neighbourhood $U$ of $x$ such that the restricted Banach bundle $E_U$ is isomorphic to a trivial
bundle.
Of course not all Banach bundles are locally trivial. It can be shown that a Banach bundle over a
locally compact Hausdorff space, whose fibres are all of the same finite dimension, is necessarily
locally trivial.

\begin{definition}
 A \emph{morphism of Banach bundles} $(f,f_0) : (E,\pi,X) \to (E',\pi',X)$ 
consists of a (norm-decreasing) continuous linear map $f:E \to E'$ and a continuous linear map $f_0:X
\to X$ such that

\begin{itemize}
 \item 
$\pi' \circ f =  f_0 \circ \pi$;
 \item
 each induced fibrewise map $f_x : E_x \to E'_{f_0(x)}$ is continuous.
\end{itemize}
If $f$ and $f_0$ are invertible and $\parallel e \parallel_{E} ~=~ \parallel f(e) \parallel_{E'}$,
then $(f,f_0)$ define a
\emph{Banach bundle isometric isomorphism}.
\end{definition}
For equivalent definitions of this isometric isomorphism we refer to the books \cite{Fell Doran} and
\cite{Dixmier}. Also in \cite{Dixmier}, Dixmier gives an alternative definition of Banach bundle as a
continuous field of Banach spaces over a topological space.

\begin{definition}\cite{Fell Doran}
 Let $X$ be a locally compact Hausdorff space. By a bundle of C*-algebras (or \emph{C*-bundle}) we mean
a Banach bundle $(E^0,\pi,X)$ together with a multiplication $\cdot$ and involution * in each
fibre $E^0_x$ of $E^0$, such that

\begin{enumerate}
 \item For each $x \in X$, $E^0_x$ is a C*-algebra under the linear operations and norms of $E^0$ and
the operations $\cdot$ and *;
 \item the multiplication is continuous on 
 
 \begin{equation}
  \{  <a,b> ~ \in E^0 \times E^0 : \pi(a) = \pi(b) \} 
 \end{equation}
 to $E^0$; and
 
 \item the involution * is continuous on $E^0$ to $E^0$.
\end{enumerate}
\end{definition}

We associate a C*-algebra $A$ to a C*-bundle $E^0$ as follows.
The algebra of compactly supported continuous sections $C_c(E^0)$ completed in the operator norm is a
concrete C*-algebra (operating on $\H = L^2(E^0)$, the inner product norm completion of $C_c(E^0)$),
which we call the enveloping algebra $A$ of $E^0$. \black We also use the symbol $C^*(E^0)$ when it is
helpful
to emphasise that $A$ arises from a C*-bundle $E^0$ in this way. 

\medskip

(A Fell bundle over a groupoid is a generalisation of a Fell
bundle over a topological
group \cite{Fell Doran}, and also a generalisation of a $C^*$-bundle),

\begin{definition} \cite{fbg}   \label{defining list}
A Banach bundle over a groupoid $p: E\rightarrow G$ is said to be a \textit{Fell bundle} if there
is a continuous multiplication $E^2 \rightarrow E$, where
\begin{displaymath}
 E^2 = \{(e_1,e_2)\in E \times E \ | \  (p(e_1),p(e_2)) \in G^2\},
\end{displaymath}
(where $G^2$ denotes the space of composable pairs of elements of $G$) and an involution $e
\mapsto
e^{\ast}$ that satisfy the following axioms. 

\begin{enumerate}
\item 
$p(e_1e_2) = p(e_1)p(e_2) \quad \forall (e_1,e_2) \in E^2$;
\item 
The induced map $E_{g_1} \times E_{g_2} \rightarrow  E_{g_1 g_2}$,\quad 
$(e_1,e_2) \mapsto e_1e_2$ is bilinear $\forall (g_1, g_2)\in G^2$;  \item
$(e_1e_2)e_3=e_1(e_2e_3)$ whenever the
multiplication is defined; 
\item $\parallel e_1e_2 \parallel \leq \parallel e_1 \parallel \cdot \parallel e_2 \parallel, \quad
\forall
(e_1,e_2) \in E^2$;
\item $p(e^{\ast})=p(e)^{\ast},\quad \forall e \in E$;
\item The induced map $E_{g} \rightarrow E_{g^{\ast}}, \quad e \mapsto e^{\ast}$ is conjugate
linear for all $g \in G$;
\item $e^{\ast \ast} = e, \quad \forall e \in  E$;
\item $(e_1e_2)^{\ast} = e_2^{\ast} e_1^{\ast},\quad \forall (e_1,e_2) \in E^2$;
\item $\parallel e^{\ast} e\parallel =\parallel e \parallel ^2, \quad \forall e \in E$;
\item $\forall e \in E, \ e^{\ast} e \geq 0$ as element of the C*-algebra $E_{p(e^*e)}$. 
\end{enumerate}
\end{definition}
The information in the following two paragraphs is also recalled from \cite{fbg}.
The fibres $\{ E_x \}_{x \in G_0}$ over the unit space $G_0$ of $G$ are C*-algebras. A \textit{unital
Fell bundle} is one in which each of these $C^*$-algebras has an identity element. A Fell bundle is
said to be 
\textit{saturated} if $E_{g_1 g_2}$ is the closed linear span of $E_{g_1}.E_{g_2}$ for all $(g_1,
g_2)~\in~G^2$. In this case, $E_{g} \otimes E_{g^*} \cong E_{g g^*}$ and  $E_{g^*} \otimes E_{g}
\cong E_{g^* g}$ for all $g, g^* \in G$, (including $gg^*,g^*g \in G_0$) and the fibres of $E$ are
called imprimitivity bimodules or Morita-Rieffel equivalence bimodules, or in other words, the
C*-algebras $E_{g g^*}$ and $E_{g^* g}$ are \emph{strongly Morita equivalent} or \emph{Morita-Rieffel
equivalent}. All Fell line bundles over groupoids (that is, a Fell bundle $(E, \pi, G)$ with fibre
$\bbc$) are saturated since in one dimension, two algebras are Morita equivalent exactly when they are
isomorphic.

\medskip

The \emph{enveloping algebra}
$C_r^*(E)$ of a Fell line bundle $(E, \pi,G)$ is the algebra of
compactly
supported continuous sections $C_c(E)$ of $(E,\pi,G)$, completed in the operator norm: $C_r^*(E) \subset
\B(L^2(E))$,
where $L^2(E)$ is the Hilbert space obtained from the completion of $C_c(E)$ in the inner product norm.
We also denote the enveloping algebra of $E$ as $C^*(E)$. Consider the restriction of the
image of the
surjection $\pi:E \to G$, to $G_0$, the object space or unit space of $G$, and let $(E^0,\pi,G_0)$
denote
the C*-bundle 
corresponding to this restriction. Put $P : C^*(E) \to C^*(E^0)$ for the restriction of the enveloping
algebras. $A = C^*(E^0)$ is also referred to as the \emph{diagonal algebra} of the Fell bundle.

\medskip


Here are some examples of C*-bundles (or C*-algebra bundles), followed by examples of Fell
bundles:-

\begin{examples} \label{Ex C bundles}
 \begin{enumerate}
     \item  Recall that every arbitrary finite dimensional C*-algebra takes the form
$A=\bigoplus_{i=1}^m
M_{n_i}$ up to a canonical isomorphism. $A$ is the enveloping algebra of a C*-bundle $E^0$ where $X$ is
a discrete space with $m$ points $\{ x \}$ indexed by $i$ and each
fibre $E^0_{ \{ x \} }$ of $E^0$ is given by a simple matrix algebra $M_{n_i}$.
 \item A fundamental example arises from treating $C_0(X)$ (the algebra of
continuous functions vanishing at infinity on a locally compact Hausdorff space $X$) as
the algebra of
continuous sections (vanishing at infinity) of a complex line bundle over $X$. 
 \item A tensor product of the C*-algebra $C_0(X)$ with a C*-algebra $E^0_x$. (For example, a minimal
C*-tensor product of C*-algebras $C$ and $D$ acting on Hilbert spaces $H$ and $K$, was defined by
Tomiyama in \cite{TomiyamaTensorProducts} to be the closure of the algebraic tensor product $C \otimes
D$ by the operator norm of the C*-algebra of bounded linear operators on $H \otimes K$.) 
\item  A continous field of elementary C*-algebras, usually satisfying Fell's condition.
(This is how the continuous trace-class C*-algebras arise.)
\item A Banach bundle whose fibre is a complex simple C*-algebra isomorphic to a Clifford algebra.
\item Let a C*-bundle $(E^0,\pi,X)$ be constructed from a separable C*-algebra $A$ as
follows. Since $A$ is separable, we identify $A$ with $\bigoplus^X_m \pi_m$, the direct sum over
all irreducible representations $\pi_m$ of $A$. Define the space of fibres of $E^0$ to be given by $\{
E^0_x \}_{x \in X} = \{ \pi_m \}$, where $X$ is identified with the pure state space $X(A)$, which is
locally compact in the weak *-topology. Then the enveloping algebra $C^*(E^0)$ of $E^0$ is identified
with $A$ and operates on $\H = L^2(E^0)$. 
 \end{enumerate}
\end{examples}


From \cite{fbg}, any saturated Fell bundle is equivalent to a semidirect product arising from
the action of a locally compact groupoid on a C*-bundle as follows.

\begin{example}[Semidirect product Fell bundle] \label{FB example 1}
Following \cite{fbg} one constructs a saturated Fell bundle $E$ over a topological
groupoid $(E,\pi,G)$ from a semidirect product structure as follows. Let $E^0$ be a C*-bundle over
$G_0$ and let $r$ and $d$ denote the domain and range maps of $G$. Let the product of
elements $e_1 =  (g,a) \in E$ and $e_2 =(h,b) \in E$, (with $a,b \in E^0$ such that $\pi(a)=d(g)$,
$\pi(b)=d(h)$), for each pair $(g,h)$ such that $r(g)=d(h)$, be given by: 
 \begin{equation*}
  e_1e_2 = (gh, \alpha_g(a)b)
 \end{equation*}
where $\alpha_g$ is an isometric *-isomorphism of fibres $\alpha_g: E^0_{d(g)} \to E^0_{r(g)}$ defined
by
$\alpha_g(a) = u a u^*$ with unitary elements $u \in E_g$, $u^* \in E_{g^*}$. The involution
on $E$ is given by $e_1^* = (g,a)^* = (g^*, \alpha_g(a^*))$. 
We denote the resulting Fell bundle by $E = G \ltimes E^0$.
This semidirect product structure induces an action of $G$ on the enveloping algebra $A = C^*(E^0)$ of
the C*-bundle $E^0$, such that $C^*(E) = G \ltimes C^*(E^0)$. 

\medskip

(a) In the case that $E$ is a Fell line bundle over a locally compact
\'etale groupoid
$G$, then $C_r^*(E)$ is identified with the twisted convolution C*-algebra of $G$ (see for
example \cite{Renault}) and for the trivial action $\alpha=1$, then $C_r^*(E)$ is identified with the
ordinary or untwisted group convolution algebra $C^*_r(G)$ of $G$.

(b) Let $E$ be a locally trivial Fell bundle with non-commutative fibre over a pair groupoid over a
locally compact simply connected (possibly discrete) manifold $G=M \times M$ such that the fibre of the
corresponding C*-bundle $(E^0, \pi, M)$ is a simple C*-algebra.
\end{example}

\begin{example}[Imprimitivity Fell bundle] \label{Example discrete FB}
Let $E$ be a unital saturated Fell bundle over a discrete groupoid $G$ whose induced C*-bundle over
the discrete space $G_0$ is given by $E^0=\bigoplus M_{n_i}(\bbc)$. (Let $G_0$ have $i$ points and the
fibres of $E^0$ be simple matrix algebras of varying dimension $n$.) The fibres of $E$ are
$M_{n_i}(\bbc)$-$M_{m_j}(\bbc)$ imprimitivity or Morita equivalence bimodules. In other notation, the
fibres $E_g$ are given by imprimitivity $E_{d(g)}$-$E_{r(g)}$-bimodules. Since $G_0$ is discrete, $E$ is
locally trivial as Banach bundle, although its fibres are not in general topologically equivalent. In
the case that $G=X \times X$, then $E$ defines a full C*-category \cite{BCL Imp}. For C*-categories see
\cite{GLR}.
\end{example}


\section{Spatial automorphisms and ambient Fell bundles} \label{spatial section}

\subsection{Spatial automorphisms}

An automorphism of a C*-bundle with enveloping algebra $A$ consists of
a Banach bundle isometric isomorphism preserving the structure of the bundle, extending to an isometric
*-isomorphism $\alpha: A \to A$. In examples there may exist automorphisms that are not implemented by
unitary operators, that is, that are not spatial, but we will only make use of spatial automorphisms.

\begin{definition} \label{IA}
 Let $(E^0,\pi,X)$ be a C*-bundle over a locally compact space $X$ with enveloping
C*-algebra
$A = C^*(E^0)$ represented on a separable Hilbert space $\H$. A \emph{spatial automorphism of the
C*-bundle}
consists of invertible continuous linear maps $f_0$ and $f$ with commuting diagram:
 
 \begin{equation} 
  \xymatrix{
        E^0 \ar[r]^f \ar[d]_{\pi} & E^0 \ar[d]^{\pi} \\
        X \ar[r]_{f_0} & X  }. 
 \end{equation}
 such that each induced fibrewise linear map $f_x : E^0_x \to E^0_{f_0(x)}$ is invertible 
continuous 
 and such that $f$ extends to an isometric *-isomorphism $\hat{f} : A \to A$ of the form  $\hat{f}(a) =
UaU^*$ where $U$ is a unitary linear map on $\H$. 
 \end{definition}
 The previous definition can be thought of as a special case of a Fell bundle morphism from \cite{BCL
En}.
 
 \medskip

Suppressing the circumflex, we will denote C*-bundle spatial automorphisms by pairs $(f_0,f)$ or
sometimes more explicitly, by triples $(f_0,f,U_{f_0})$. We will refer to the bundle structure
preservation condition 

\begin{equation} \label{covariance}
 \pi \circ f = f_0 \circ \pi
\end{equation}
as the \emph{covariance
condition}\footnote{Observe that $(f_0,f)$ is permuting and transforming the fibres. In finite
dimensions, $U$ is given by a matrix with one non-zero block in each row and each column of blocks.}.

\medskip

In our related work, we will see that \eqref{covariance} is one of the main features distinguishing
C*-bundle dynamical systems from other C*-dynamical systems.

\medskip

Clearly, these ``covariant'' automorphisms $(f_0,f)$ define a subgroup
$\SpatialAut(E^0)$ of the group
$\mathrm{Aut}(A)$ of all automorphisms of the C*-algebra $A$.

  

\medskip

In the case that the fibres of $E^0$ are of varying dimension or are topologically inequivalent i.e.
if $E^0$ is not locally trivial, then if $(f_0,f)$ is a spatial automorphism of
$E^0$, then $f_0$ is the identity homeomorphism.

\begin{remark}
\textit{It was kindly pointed out by Roberto Conti
that if $A$ is a C*-algebra $\K(\H)$ consisting of all compact operators on a Hilbert space $\H$
(considering the case
of a C*-bundle with only one fibre, given by $\K(\H)$), then all automorphisms $\phi$ of $A$ are
implemented by unitaries operators $U$ on $\H$ such that $U \notin A$, so that in this case $\phi$ is
never inner.}
\end{remark}

For each C*-bundle $(E^0,\pi,X)$ over a locally compact Hausdorff space, the homeomorphisms $f_0 : X \to
X$ form a group, $\Homeo(X)$. 

\medskip

We emphasise that the group
of global bisections $\Bis(G)$ of the pair groupoid $G=X \times X$ over a topological space $X$ is
identified with the group $\Homeo(X)$ of homeomorphisms of $X$, and if $M$ is a smooth manifold,
then $\Bis(G)$ for $G = M \times M$, is identified with the group $\Diff(M)$ of diffeomorphisms. Let
$\Bisherm(G)$ denote the abelian subgroup of self-adjoint diffeomorphisms, 
\begin{equation} \label{Bisherm}
 g=g^* \in \Bis_{\mathrm{herm}}(G) \subset \Bis(G).
\end{equation}

Recall that the general linear groupoid $GL(V)$ of a vector bundle $V$ over a space $X$ is the canonical
groupoid of linear
isomorphisms between the fibres of $V$ and that a representation $\rho_G$ of a groupoid $G$ on a vector
bundle $V$ is a groupoid homomorphism
$\rho_G: G \to GL(V)$ into the general linear groupoid of $V$.  

\begin{definition}
The \emph{general linear groupoid} $GL(E^0)$ of a C*-bundle $(E^0, \pi, X)$ with isomorphic fibres, is
the set of all isometric *-
isomorphisms $\phi_{(x,y)}$ between each pair of C*-algebras:
\begin{equation}
 GL(E^0) = \{ \phi_{(x,y)}: E^0_x \to E^0_y ~~\vert~~ x,y \in X \},
\end{equation}
together with the canonical composition of isomorphisms,
inverses and units $\iota_{x}:E^0_x \to E^0_x$. 
\end{definition}

\begin{lemma} \label{lemma}
 Let $(E^0, \pi, X)$ be a C*-bundle. $\SpatialAut(E^0)$ is a subgroup of the group $\Bis(GL(E^0))$
of global bisections of the groupoid $GL(E^0)$.
\end{lemma}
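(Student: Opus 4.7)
The plan is to exhibit an injective group homomorphism $\Psi\colon \SpatialAut(E^0) \hookrightarrow \Bis(GL(E^0))$ by taking a spatial automorphism to its collection of fibrewise pieces, and then to check that the image is closed under the groupoid bisection composition and inversion. Given $(f_0,f) \in \SpatialAut(E^0)$, I would define $\Psi(f_0,f)$ to be the assignment $\sigma_{(f_0,f)}\colon x \mapsto f_x$, where $f_x\colon E^0_x \to E^0_{f_0(x)}$ is the fibrewise restriction, which exists and is continuous by Definition \ref{IA}. Since $f$ extends to $\hat f = \Ad U$ on $A = C^*(E^0)$, each $f_x$ is the restriction of an isometric *-isomorphism and is therefore itself an isometric *-isomorphism between the fibre C*-algebras, so $\sigma_{(f_0,f)}(x) \in GL(E^0)$.

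Next I would check that $\sigma_{(f_0,f)}$ is a global bisection. By construction the source of $f_x$ is $x$, so $\sigma_{(f_0,f)}$ is a section of the source map; and because $f_0$ is a homeomorphism of $X$, the assignment $x \mapsto f_x$ composed with the range map recovers $f_0$, so it is simultaneously a section of the range map. Group compatibility is then routine: for $(g_0,g),(f_0,f) \in \SpatialAut(E^0)$ the covariance condition gives $(g \circ f)_x = g_{f_0(x)} \circ f_x$, which agrees with the composition law for bisections in $GL(E^0)$, and $(\id_X,\id_{E^0})$ is sent to the unit bisection $x \mapsto \iota_x$.

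Injectivity is immediate: if $\Psi(f_0,f) = \Psi(g_0,g)$ then reading off the range fibres forces $f_0 = g_0$ and agreement of the fibrewise pieces forces $f = g$ pointwise, hence globally. For closure under inverses, note that if $\hat f(a) = UaU^*$ with $U$ unitary then $\hat f^{-1}(a) = U^*aU$ is also unitarily implemented, and the pair $(f_0^{-1}, f^{-1})$ satisfies the covariance condition since $f_0$ is a homeomorphism; its image under $\Psi$ is the pointwise inverse $x \mapsto (f_{f_0^{-1}(x)})^{-1}$, which is exactly the groupoid-theoretic inverse of $\Psi(f_0,f)$.

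The main subtlety, I expect, lies less in the algebraic bookkeeping than in justifying that the globally defined inner automorphism $\Ad U$ on $A$ descends cleanly to each fibre as an isometric *-isomorphism $f_x$ — i.e.\ that no fibres are identified or collapsed under restriction. This should follow from the covariance condition $\pi \circ f = f_0 \circ \pi$ together with the C*-bundle continuity axioms, which ensure that evaluation of a continuous section at $x$ gives a well-defined element of $E^0_x$ and that $f$ permutes these evaluations by $f_0$; the \emph{isometric} part of the *-isomorphism is then inherited from the global isometry of $\hat f$.
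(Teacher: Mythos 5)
Your proposal is correct and takes essentially the same route as the paper: the crux in both is that the covariance condition $\pi \circ f = f_0 \circ \pi$, with $f_0$ a homeomorphism of $X$ (equivalently a bisection of the pair groupoid $X \times X$), is precisely what makes the family of fibrewise maps $x \mapsto f_x$ a global bisection of $GL(E^0)$. The paper's own proof records only this one-line identification, whereas you additionally carry out the verifications (fibrewise maps land in $GL(E^0)$, compatibility with composition, injectivity, closure under inverses) that the paper leaves implicit.
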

\begin{proof}
 The global bisections $\alpha \in \Bis(GL(E^0))$ of $GL(E^0)$ satisfy the covariance condition $\pi
\circ \alpha = f_0 \circ \pi$ where $f_0$ is a bisection of the groupoid $G=X \times X$. 
\end{proof}




Let $\U(\H)$ denote the group of unitary
linear maps on a Hilbert space $\H$ and let $\psi \in \H$. Recall that a strongly continuous
unitary representation of a group $\G$ is given by a group homomorphism $g \mapsto U_g \in \U(\H)$ such
that $g \mapsto U_g \psi$ defines a continuous map from $\G$ to $\H$.

\begin{definition} \label{unitary rep}
 Let $f_0,g \in \Homeo(X)$. A \emph{unitary representation} $\rho_U$ of $\Homeo(X)$ on $E^0$ is
a group homomorphism, 
 \begin{gather}
  \rho_U : \Homeo(X) \to \SpatialAut(E^0),\\ 
g \mapsto U_g a U_g^*,\\
\textrm{or} ~~ f_0 \mapsto (f_0,f,U_{f_0})
 \end{gather}
  \end{definition}
A unitary representation $\rho_U(\Homeo(X))$ is said to be \emph{strongly continuous} if the map from
$\Homeo(X)$ to $\H = L^2(E^0)$ defined by
\begin{gather}
 g \mapsto \alpha_g(a)\psi, ~~\psi \in \H,
\end{gather}
is continuous for each $a \in A$. 

\medskip

Of course, one may extend the previous definition to unitary representations of subgroups $\G \subset
\Homeo(X)$ and moreover, if $M$ is a smooth manifold and $\G$ is a subgroup of $\Diff(M)$, one defines a
unitary representation $\rho_U$ of $\G$ on a C*-bundle $(E^0,\pi,M)$ as a group homomorphism
$\rho_U : \G \to \SpatialAut(E^0)$. 

\medskip

If $\rho_U(\G) = \Bis(\rho_G)$ for some subgroup $\G$ of $\Homeo(X)$ or $\Diff(M)$, where $\rho_G : G
\to GL(E^0)$ is a groupoid homomorphism, then we say that $\rho_U$ induces a groupoid representation
$\rho_G$ on the C*-bundle $E^0$.

\medskip

Note that each unitary representation $\rho_U$ on $E^0$ of a group $\G$ provides an action $\alpha$ of
the group $\G$ on the C*-algebra $A$. 

See \cite{Rogier} for unitary representations of groupoids on Hilbert bundles.

\subsection{Covariance group}

Let $M$ be a locally compact Hausdorff space admitting a differential structure.

 \begin{definition}\label{covariance group}
 Let $(E^0,\pi,M)$ be a C*-bundle over a differentiable manifold $M$ and let
$\{ g(\lambda) \}_{\lambda \in \mathbb{R}}$ be a 1-parameter subgroup of $\Diff(M)$. A \emph{strongly
continuous 1-parameter covariance group} $\G_{\sigma} \subset \SpatialAut(E^0)$
is the image of a strongly continuous representation of $\{ g(\lambda) \}_{\lambda \in \mathbb{R}}$ on
$E^0$. 
 \end{definition}

Let $(E^0,\pi,M)$ be a C*-bundle as above in \ref{covariance group}. Each $\G_{\sigma}$ is a subgroup of
a unitary representation $\rho_U$ of $\Diff(M)$ such that $\rho_U = \Bis(\rho_G)$. If $\{ g(\lambda)
\}_{\lambda \in \mathbb{R}}$ has a minimal flow (having a dense orbit) on $M$, we say that $\G_{\sigma}$
has a minimal flow across the fibres of $E^0$. The significance of the minimal flow for this paper is
that $\{ y ~:~ g(x) = y,~~ x,y \in M, ~~ g \in \{ g(\lambda) \}_{\lambda \in \mathbb{R}}  \}$ is dense
in $M \times M$.

\medskip

We will leave details on the infinitesimal generator $\sigma$ of $\G_{\sigma}$ for another paper.
Applications are expected to arise in the context of modular quantum gravity \cite{BCL mst}.

\subsection{The ambient Fell bundle}

Crossed product C*-algebras arise in C*-dynamical systems and are by definition regular (see below).
Kumjian observed that the structure of these C*-algebras can be illuminated by studying the way in
which C*-subalgebras embed in them, using the set of normalisers: 

\begin{definition}\cite{C*diag} \label{N}
 Suppose that $A$ is a C*-subalgebra of a C*-algebra $B$. An element $b \in B$ is said to
\emph{normalise} $A$ if 
 \begin{itemize}
  \item $b^* A b \subset A$ 
  \item $b A b^* \subset A$ 
 \end{itemize}
The collection of all such \emph{normalisers} is denoted $N(A)$. Evidently, $A \subset N(A)$;
further, $N(A)$ is closed under multiplication and taking adjoints. A normaliser, $b \in N(A)$ is said
to be \emph{free} if $a^2=0$. The collection of free normalisers is denoted $N_f(A)$.
\end{definition}



A C*-subalgebra $A$ in $B$ is said to be \emph{regular} in $B$ if the normalising set $N(A)$ in $B$
generates $B$, that is,  $\ls N(A) = B$, where $\ls$ denotes closed linear span.

\begin{proposition} \label{regular}
 Let $(E, \pi,G)$ be a unital Fell bundle from which we obtain C*-algebras $A \subset B$ such that $A
= C^*(E^0)$ and $B = C^*(E)$. If $E$ is saturated then $A$ is regular in $B$.
\end{proposition}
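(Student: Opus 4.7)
The plan is to show that every element of the dense subalgebra $C_c(E) \subset B$ is a finite sum of normalisers, so that $\ls N(A) \supset C_c(E)$ and hence $\ls N(A) = B$.

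First I would cover the support of a given $b \in C_c(E)$ by open bisections of $G$, i.e., open sets $S \subset G$ on which $r|_S$ and $d|_S$ restrict to homeomorphisms onto open subsets of $G_0$, choose a continuous partition of unity $\{\chi_i\}$ on $G$ subordinate to this cover, and decompose $b = \sum_i \chi_i \cdot b$ into a finite sum of sections $b_i := \chi_i \cdot b$ with $\mathrm{supp}(b_i) \subset S_i$.

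Next I would verify that each such $b_i$ lies in $N(A)$. For $a \in C_c(E^0) \subset A$, the Fell-bundle convolution $b_i a b_i^*$ evaluated at $k \in G$ is a sum or integral over fibrewise products $b_i(g_1) a(g_2) b_i^*(g_3)$ with $g_1 \in S_i$, $g_2 \in G_0$, $g_3 \in S_i^*$, and $g_1 g_2 g_3 = k$. By axiom (1) of Definition \ref{defining list} these products live in $E_{g_1 g_2 g_3}$; the bisection property of $S_i$ forces $g_2 = d(g_1)$ and $g_3 = g_1^*$, giving $k = g_1 g_1^* = r(g_1) \in G_0$. Hence $b_i a b_i^*$ is a continuous section supported in $r(S_i) \subset G_0$, i.e., an element of $A$. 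The argument for $b_i^* a b_i$ is symmetric. Saturation enters here to ensure that the fibrewise products $E_{g_1} \cdot E^0_{d(g_1)} \cdot E_{g_1^*}$ span all of $E^0_{r(g_1)}$ (so the normalisers built this way are not artificially confined to a proper subbundle), and unitality supplies the continuous unit sections underwriting the partition-of-unity step.

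The main obstacle is the first step: producing an open-bisection cover of $\mathrm{supp}(b) \subset G$ to which a partition of unity can be subordinated. For the pair groupoid $G = X \times X$ (where bisections correspond to local homeomorphisms of the base) and for étale groupoids --- the principal cases of interest in this paper --- open bisections form a basis for the topology, so the partition of unity is immediate. In the fully general locally compact setting one would need to appeal to local triviality of $E$ or a Haar-system refinement to make the decomposition rigorous; once this step is in place, the rest follows directly from the Fell-bundle multiplication rule and the saturation assumption.
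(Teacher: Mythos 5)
Your proposal takes a genuinely different route from the paper's. You work at the level of sections: cover the support of $b \in C_c(E)$ by open bisections, split $b$ with a partition of unity, and verify from the convolution formula that each piece normalises $A$ --- this is the standard Kumjian--Renault style argument, and in the \'etale setting it is correct and substantially more rigorous than what the paper does. The paper instead argues fibrewise: each element $b \in E_g$ determines a free normaliser $a \mapsto bab^*$, saturation ($E_g \otimes E_{g^*} \cong E_{gg^*}$) guarantees these conjugations reach every element of the unit fibres, and one then ``repeats over all fibres'' and takes closed spans. That argument is only literal when $G$ is discrete, since only then are single fibre elements themselves (delta-)sections lying in $C_c(E) \subset B$. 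Note also that your argument never actually uses saturation: once sections supported on open bisections normalise $A$ and such sections span $C_c(E)$, regularity follows. Your parenthetical that saturation keeps the normalisers from being ``confined to a proper subbundle'' concerns fullness of the bimodules $E_g$, not the spanning of $B$; this is consistent with the paper's own Examples \ref{Examples regular}(5), which records that regular non-saturated Fell bundles exist, so saturation is a sufficient rather than necessary hypothesis. Likewise, unitality plays no role in a partition-of-unity step, which only multiplies sections by scalar functions.

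One claim in your proposal is genuinely wrong, and it matters: you assert that for the pair groupoid $G = X \times X$ open bisections form a basis for the topology, so that this case is ``immediate''. That holds only when $X$ is discrete. For non-discrete $X$ (e.g.\ a manifold), $X \times X$ with the product topology is not \'etale: its bisections are graphs of partial homeomorphisms of $X$, and a graph has empty interior in $X \times X$, so there are no nonempty open bisections at all and your partition of unity has nothing to be subordinate to. Thus the case you wave through is precisely a case your decomposition cannot handle; what you actually prove is the \'etale (in particular discrete) case. In fairness, the non-\'etale case also lies beyond the paper's own proof --- there even the inclusion $C^*(E^0) \subset C^*(E)$ is delicate, since $G_0$ is a null set for the Haar system --- but you should not cite the non-discrete pair groupoid as an unproblematic instance.
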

\begin{proof}
 For each $g,g^* \in G$, $E_g$ is a bimodule over $E_{gg^*}$ and $E_g \otimes E_{g^*}
\subset E_{gg^*}$. We may define a map $E_{d(g)} \to E_{d(g)}$ by $\alpha_b(a) = b a b^*$ for $b \in
E_g$ (observe that $b$ determines a free normaliser).
Since $E$ is saturated, $E_g \otimes E_{g^*} \cong E_{gg^*}$, therefore $b$ can chosen so that for any
$a' \in E_{d(g)}$ there is an $a \in A$ such that $\alpha_b(a) = a'$. We can repeat this process for
all fibres $E_g$ of $E$, and since any $n \in N(A)$ is a sum of free normalisers, it follows that $\ls
N(A) = B$.
\end{proof}

\begin{examples} \label{Examples regular}
\begin{enumerate}
 \item All complex line bundles over groupoids are saturated since in one dimension, two algebras are
Morita equivalent exactly when they are isomorphic.
 \item From \cite{fbg}, any saturated Fell bundle is equivalent to a semi-direct product arising from
the action of a locally compact groupoid on a C*-bundle. 
 \item A Fell bundle with the structure of a crossed product of the form $C^*(E) = \G \ltimes A$
for a locally compact or discrete group $\G$ is saturated. A Fell bundle over a topological group is
called an algebraic bundle.
 \item Let $E^0=\bigoplus M_{n_i}(\bbc)$ over a discrete space of $i$ points so that the fibres of $E^0$
are the simple matrix algebra summands of varying dimension $n$. The fibres of $E$ are
$M_{n_i}(\bbc)-M_{m_j}(\bbc)$ imprimitivity or Morita equivalence bimodules. Note that even though the
algebra $C^*(E^0)$ is regular in $C^*(E)$, $C^*(E)$ is not a crossed product algebra arising from the
action of a groupoid (or group).
 \item Not all regular Fell bundles are saturated. An example of a Fell bundle that is not saturated is
given by a Fell bundle $E$ over inverse semigroup $S$ if the multiplication of sections is given by
$(s_1,a_1)(s_2,a_2)=(s_1s_2, \alpha_{s_1}(a_1)a_2)$ then $\alpha_{s_1}$ does not induce an isomorphism
of fibres $E_1 \to E_2$.
\end{enumerate}
\end{examples}

The following result demonstrates the relationship between a C*-bundle spatial automorphism group
$\SpatialAut(E^0)$ and the unitary normalisers of $C^*(E^0)$ in $C^*(E)$ that arises due to the
covariance condition \eqref{covariance}. (See also comment \ref{embed}). 

\medskip

Note that the subgroup of spatial automorphisms $(f,f_0)$ implemented by unitary elements $U$
of the ambient algebra $C^*(E)$, form a subgroup $S$ of the group of inner automorphisms
$\mathrm{InnAut}(C^*(E))$ of $C^*(E)$.

\begin{theorem} \label{N(A)=InnAut}
Let $(E^0,\pi,X)$ be a C*-bundle with enveloping C*-algebra $A$ and let $(E,\pi,G)$ be a
saturated
Fell bundle over the pair groupoid $G$ over a locally compact space $X$ such that $E^0$ is the
restriction of $E$ over
$G_0$ and such that all fibres of $E^0$ are of equal dimension. Let $\Bis(G)$
denote the group of global bisections of $G$ and let $B$ denote the enveloping C*-algebra $C^*(E)$ of
$E$. The following groups are isomorphic:

\begin{enumerate}
 \item the group $S := \SpatialAut(E^0) \cap \mathrm{InnAut}(C^*(E))$ 
 \item the group of unitary normalisers $N_U(A)$ of $A$ in $B$. 
\end{enumerate}
\end{theorem}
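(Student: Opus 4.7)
The plan is to establish mutually inverse group homomorphisms between $S$ and $N_U(A)$. In the forward direction, given $(f_0,f,U) \in S$, the hypothesis that $\hat f = \Ad(U)|_A$ lies in $\InnAut(C^*(E))$ guarantees a unitary $v \in C^*(E)$ with $\Ad(v)|_A = \hat f$; since $\hat f \in \mathrm{Aut}(A)$, we have $v A v^* = v^* A v = A$, so $v$ is a unitary normaliser, and we set $\Psi(f_0,f,U) := v$. Conversely, for $u \in N_U(A)$, the inner automorphism $\Ad(u)$ of $B$ restricts to an element of $\mathrm{Aut}(A)$, and the task is to show that this restriction is spatial for the C*-bundle $E^0$; once this is done, reading off the associated pair $(f_0,f)$ defines $\Phi(u) \in S$.

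The heart of the argument is verifying, in this backward direction, that $\Ad(u)|_A$ is spatial, i.e., that it arises from a homeomorphism $f_0 \in \Homeo(X)$ fitting into the covariance square $\pi \circ f = f_0 \circ \pi$. I would exploit saturation of $E$: by Example \ref{FB example 1} the Fell bundle is equivalent to a semidirect product $G \ltimes E^0$, so any element of $B = C^*(E)$ decomposes naturally over the fibres $E_g$ indexed by $g \in G = X \times X$. The normalising condition $u A u^* \subset A$ together with unitarity forces $u$ to be concentrated on a bisection of $G$, which by the identification $\Bis(X \times X) = \Homeo(X)$ delivers the desired $f_0$. The bundle map $f$ covering $f_0$ is then defined fibrewise by restricting $\Ad(u)$ via the multiplication rule $e_1 e_2 = (gh,\alpha_g(a)b)$ of Example \ref{FB example 1}, and the covariance condition is immediate.

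The main obstacle is the slice-localisation step just described. For commutative diagonals this is essentially Kumjian's slice/normaliser analysis in \cite{C*diag}, but here $A = C^*(E^0)$ is noncommutative in general, so one must invoke both hypotheses of the theorem: saturation ensures each $E_g$ is a full Morita--Rieffel equivalence bimodule rich enough to contain the unitary ``translators'' between fibres, while the equal-dimension hypothesis on the fibres of $E^0$ yields local triviality and thereby rules out fibre-type obstructions to the global homeomorphism $f_0$. Once $\Psi$ and $\Phi$ are constructed, that they are mutually inverse follows from the observation that in each direction the implementing unitary is determined by the automorphism it induces on $A$ up to a unitary commuting with $A$ (handled either by fixing canonical representatives or by passing to equivalence classes); the group homomorphism property then reduces to $\Ad(v_1 v_2) = \Ad(v_1)\Ad(v_2)$.
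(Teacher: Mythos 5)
Your proposal is correct, and its central step coincides with the paper's: the passage from a unitary normaliser $u \in N_U(A)$ to an element of $S$ --- $u$ must be supported on a global bisection of $G = X \times X$ (else $uau^* \notin A$), that bisection is the homeomorphism $f_0$, and $\pi(u) = f_0$ is equivalent to the covariance condition $\pi \circ f = f_0 \circ \pi$ --- is exactly the paper's argument (via Lemma \ref{lemma}), and like the paper you assert the bisection-localisation rather than prove it in full, though your appeal to the semidirect-product form $E = G \ltimes E^0$ of Example \ref{FB example 1} is a sensible way to make it precise. Where you genuinely diverge is the opposite direction, $S \to N_U(A)$. You take the definitional shortcut: membership of $\mathrm{InnAut}(C^*(E))$ hands you a unitary $v \in C^*(E)$ with $\Ad(v)\vert_A = \hat{f}$, and since $\hat{f}$ and $\hat{f}^{-1}$ are automorphisms of $A$ one gets $vAv^* = v^*Av = A$ for free, so $v \in N_U(A)$ with no further work. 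The paper instead argues constructively: invoking saturation through Proposition \ref{regular} (regularity of $A$ in $B$), it assembles the implementing unitary $U_g$ as a limit of finite linear combinations of fibrewise free normalisers $u_{xy} \in E_{xy}$ realising the permutation of fibres determined by $f_0$. Your route is logically leaner --- saturation and the equal-dimension hypothesis enter only once, in the localisation step --- while the paper's route produces an explicit recipe expressing $U_g$ in Fell-bundle terms, which is what gets recycled later (Theorem \ref{thm esspal}, Theorem \ref{bridge}, and the formula \eqref{partition}). You also note, correctly and unlike the paper, that $u \mapsto \Ad(u)\vert_A$ annihilates the unitaries of $A' \cap B$ (e.g.\ the scalars $\mathbb{T} \cdot 1$), so the asserted isomorphism $N_U(A) \cong S$ strictly holds only after fixing representatives or passing to that quotient; your parenthetical treatment of this point is an improvement on the paper's proof, which is silent about it.
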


\begin{proof}
 Each $U \in N_U(A)$ implements an element of $\Bis(GL(E^0))$ and gives an isometric *-isomorphism $f :
A \to A$ as follows. For $U \in N(A) \subset B$, we have $U a U^* \in A$. If $U$ were a section not
supported on a bisection $g \in \Bis(G)$ then $U a U^* \notin A$. Therefore $U$ is supported on $g \in
\Bis(G)$. Observe that if $U \in B$ implements some map
$f(a):=UaU^* \in B ~~\forall a \in A$, then the condition $\pi(U)=g=f_0$ for some $g \in \Bis(G)$ is
equivalent to the condition on $(f,f_0)$ that $\pi \circ f = g \circ \pi$, (\ref{lemma}). This condition
is
satisfied for $U \in N_U(A)$. Since $f$ defines an isometric *-isomorphism $A \to A$, then $(f,f_0) \in
\SpatialAut(E^0)$ and so $(f,f_0) \in S$. 

Conversely, every $(f,f_0) \in S$ can be implemented by some unitary normaliser $U \in
N_U(A)$, because the homeomorphism $f_0$ involves a permutation
transformation of the fibre set of $E^0$. In other words each $g \in
\Bis(G)$ designates a full permutation of the fibre set of $E^0$. Fix a $g \in \Bis(G)$ and let
Perm be
the corresponding set consisting of pairs of fibres $(E^0_x,E^0_y)$, where $x$ may be equal to $y$. 
Since $E$ is saturated,  $A=C^*(E^0)$ is regular in $B=C^*(E)$ (\ref{regular}). For each pair
$(E^0_x,E^0_y)$ in Perm, the free normalisers can provide isometric *-isomorphisms of the form, 

\begin{equation}
 \alpha_{xy}:E^0_x \to E^0_y, ~~ ~ ~ \alpha_{xy}(a_x) = u_{xy} a_x u_{xy}^* ~~~= a_y \in E^0_y.
\end{equation}
with $u_{xy}$ a unitary element of the fibre $E_{xy}$ of $E$ and $u_{xy}^* \in E_{yx}$. Note
that each
$U \in N_U(A)$ is a full rank operator on $\H$. A unitary normaliser $U \in N_U(A) \subset \ls
N(A)$ can be approximated from a set of finite linear combinations of normalisers in $N(A)$. In this
way arises a $U_g \in N_U(A)$ and a map $f : E^0 \to E^0$, defined by $f(a) = U_g a U_g$,
where each finite linear combination is a sum of maps $\alpha_{xy}$ parametrised by elements of the set
Perm. This results in an assignment $(f_0 = )~ g \mapsto U_g \in N_U(A)$. 
\end{proof}

\section{Non-abelian C*-subalgebras} \label{non-ab}

In view of the previous two sections, one might ask if C*-bundles and their spatial automorphisms ought
to be included or embedded in a larger algebraic context, seeing as the set $N_U(A) \notin A$. (See
later section \ref{Embeddings section} on embedding invariants.) This section contains some background
information on C*-subalgebras for convenience and coherence and also includes some modifications to
include non-commutative C*-subalgebras.

\medskip

Renault provided a notion of a Cartan subalgebra $A \subset B$ for the context of C*-algebras, called
Cartan pair $(A,B)$, with $A$ a maximal \emph{abelian} subalgebra (masa) \cite{Renault},
Previously, Kumjian  defined (abelian) C*-diagonals, a similar but less general notion \cite{C*diag}.
For more
information, see \ref{above} below. Kumjian showed that an invariant of C*-diagonals is a Fell line
bundle over a \emph{principal}
groupoid, and following those techniques, Renault showed that a Cartan pair invariant is given by a
Fell
bundle over a (locally compact Hausdorff) \emph{essentially principal} groupoid, thus widening the scope
of the examples. (An essentially principal groupoid $G_{ep}$ is defined as an \'etale groupoid in which
the interior of the
isotropy group bundle is equal to the object space $G_{ep}^0$ of $G_{ep}$. Equivalently,
the set of
points of $G_{ep}^0$ with trivial isotropy is dense. Every essentially principal
groupoid is
isomorphic to the groupoid of germs of a pseudogroup of locally defined homeomorphisms between the open
sets of some topological space $X$ \cite{Renault}.)

\medskip

Introducing a notion of non-commutative Cartan pair,
Exel addresses an even wider class of examples of regular C*-algebras, by constructing a Fell bundle
over a topological inverse semigroup for each generalised Cartan pair \cite{non com
cartan}. \footnote{Exel seems to have made this choice
perhaps because he was not interested in examples of Fell bundles over groupoids with
non-commutative fibres over $G^0$.}).




\subsection{Pure states and faithful conditional expectations} 
 
\begin{definition}\cite{fce} \label{P}
Let $B$ be a C*-algebra and let $A \subseteq B$ be a C*-subalgebra. Then we call $P:B \to A$ a
conditional expectation of $B$ onto $A$ if it satisfies the following three properties:
\begin{enumerate}
 \item $P(a)=a$, $\forall a \in A$;
 \item $P(a_1 b a_2) = a_1 P(b) a_2$  $\forall b \in B$, $\forall a_1,a_2 \in A$;
 \item $b \geq 0 ~\implies~ P(b) \geq 0$ $\forall b \in B$.
\end{enumerate}
We say that $P$ is a faithful conditional expectation if, in addition, $P(b^*b) \neq 0$ for all
non-zero $b \in B$. 
\end{definition}

Note that in definition \ref{P} there is no declaration that $A$ should be commutative. Let $p$, $p_i$
be central projections of $B$ for some Cartan pair $(A,B)$ \emph{such that $A$ is not
necessarily commutative}. If $A$ is non-commutative, then the projections $p_i$ may vary in
rank. The following are examples of faithful conditional expectations of $B$ onto $A$.

\begin{examples} \cite{fce,C*diag} \label{Examples P}
 \begin{enumerate}
  \item $P(b) = pbp + (1-p)b(1-p)$, $b \in B$.
  \item $P(b) = \sum_i p_i b p_i$ where $(p_i,...,p_n)$ is an $n$-tuple of pairwise orthogonal
projections of possibly varying rank.
  \item  Note that in this example, $A$ is abelian. 
  
  Let $B = M_n(\bbc)$, the algebra of complex $n$ by
$n$ matrices. Choose a set of matrix units,
$\{ e_{ij} : 1\leq ~ i,j ~ \leq n \}$ (one has $e_{ik} = e_{ij}e_{jk}$ and $e_{ij}^*=e_{ji}$), and let
$A$ denote the diagonal subalgebra (viz, $A$ is spanned by the $e_{ii}$s). Then $a = \Sigma \lambda_{ij}
e_{ij}$ normalises $A$ if and only if for each $i$, $\lambda_{ij} \neq 0$ for at most one $j$, and for
each $j$,  $\lambda_{ij} \neq 0$ for at most one $i$ (i.e., at most one entry is non-zero in each row
and column). If $i \neq j$, $e_{ij} \in N_f(A)$.  
 Let $P : B \to A$ be given by:
 \begin{equation}
  P(b) = \Sigma e_{ii} b e_{ii}.
 \end{equation}
This defines a faithful conditional expectation for which:
\begin{equation}
 \kernel P = \ls N_f(A).
\end{equation}
where $\ls N_f(A)$ denotes the closed linear span of the free normalisers of $A$ in $B$.
\end{enumerate}
\end{examples}

\begin{example} \label{Example P}
Let $\alpha$ be an action specifying a Fell line bundle $(E,\pi,G)$ as described in \ref{FB
example 1}, where $G$ is an essentially principal groupoid over a locally compact space $G_0 = X$.
Recall that $(A=C^*(E^0), B=C^*(E))$ is a Cartan pair. (Note that $\alpha_{g^*g}(b) = \alpha_x(b)$ is
not defined if
$b$ is not in $A$.) Then,
\begin{equation} \label{P nc}
 P(b) = \int_X \alpha_x(b) dx
\end{equation}
is the unique faithful conditional expectation from $B$ onto $A$.
 \end{example}

Let $A$ be a (resp. unital) C*-algebra. Recall that the space of pure
states $X(A)$ of $A$ is (resp. locally) compact in the weak *-topology. (We identify $X(A)$ with its
image $X$). 
\begin{definition}\cite{C*diag}
 If $(A,B)$ is a Cartan pair with abelian C*-subalgebra $A$, the ambient algebra $B$ is said to have the
\emph{extension property} if the pure states of $B$ restrict to the set
$X(A)$, that is, for $x \in X(A)$, if $x \circ P(B) = X(A)$. Equivalently,  $B = A + [B,A]$.
\end{definition}

The extension property ensures the
existence of a conditional expectation $P$ such that $x \circ P(B) = X(A)$. Let $A$ be abelian. If $B$
has the extension property, it follows that the abelian C*-subalgebra $A$ is maximal.

\medskip

Following \cite{C*diag}, for $n \in N(A)$, put

\begin{eqnarray}
 s(n) = \{ x \in X : x(n^*n) > 0 \}, \\
 I(n) = \{ a \in A : x(a) \neq 0 \implies x \in s(n) \}.
\end{eqnarray}
Note that  $s(n)$ is open in $X$ and $I(n)$ is an ideal in $A$.
Each $n \in N(A)$ defines a partial homeomorphism of $X$, 

\begin{equation} \label{partial homeo}
 f_{0,n} : s(n) \to s(n^*).
\end{equation}

\medskip

Finally, recall that the span of a possibly infinite
dimensional vector space $V$, is the set of all finite linear combinations of vectors $v \in V$.
Similarly, since $B = \ls N(A)$ where span denotes closed linear span, any element of $B$ can be
approximated by a set of finite linear combinations of normalisers (this was referred to in
\cite{C*diag} and will be useful to us below).

\subsection{Cartan pairs and C*-diagonals}

\begin{definition} \cite{Renault} \label{Renault def}
Let $A$ and $B$ be C*-algebras and let $A \subset B$. $A$ is said to be a \emph{Cartan
subalgebra} if:
\begin{enumerate}
 \item $A$ contains an approximate unit of $B$;
 \item $A$ is maximal abelian;
 \item $A$ is regular in $B$;
  \item There is a faithful conditional expectation of $B$ onto $A$.
\end{enumerate}
\end{definition}
If in addition, $\kernel P = \ls N_f(A)$, then $A$ is said to be \emph{diagonal} in $B$ (\cite{C*diag}).

\medskip

In \cite{Renault}, Renault shows uniqueness of the faithful conditional expectation $P:B \to A$ for
Cartan pairs $(A,B)$ and in \cite{ncg and sm}, Exel warns that for non-commutative algebras, there may
be many inequivalent probability measures. For this reason, he imposes a condition to ensure that the
faithful conditional expectation associated to a given non-commutative Cartan pair must be unique.
Moreover, in some of the examples we give below, the set (we define) of projections is a feature of
the algebra itself. Below we impose uniqueness of $P$ as an axiom. 

\medskip

Our non-commutative generalisation is :-

\begin{definition} \label{A in B}
 Let $A$ and $B$ be C*-algebras $A \subset B$, where $A$ is not necessarily commutative and let $A$
contain the unit of $B$ or an approximate unit for $B$. $(A,B)$ is said to be a \emph{(not necessarily
commutative) Cartan pair} if $A$ is regular in $B$ and if there is a unique faithful conditional
expectation $P:B \to A$.

If in addition, $\kernel P = \ls N_f(A)$, then $A$ is said to be \emph{diagonal} in $B$.
\end{definition}

\begin{lemma}
 Let $A$ be a non-commutative C*-diagonal in $B$. Then $B$ satisfies the criterion for the extension
property relative to $A$, namely,  that $B = A + \overline{[B,A]}$. (The overline denotes closed linear
span.)
\end{lemma}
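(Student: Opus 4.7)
The plan is to use the unique faithful conditional expectation $P$ to split any $b \in B$ into a piece in $A$ and a piece in $\kernel P$, and then show that every free normaliser sits in $\overline{[B,A]}$ by a continuous functional calculus argument exploiting $n^2 = 0$.

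First I would write $b = P(b) + (b - P(b))$ for $b \in B$. Definition \ref{P} places $P(b) \in A$, and the diagonal hypothesis $\kernel P = \ls N_f(A)$ places $b - P(b) \in \ls N_f(A)$. Since $\overline{[B,A]}$ is a closed linear subspace of $B$, it suffices to show that every free normaliser $n \in N_f(A)$ lies in $\overline{[B,A]}$: granted that, $b - P(b)$, being a norm-limit of finite linear combinations of such $n$, also lies in $\overline{[B,A]}$, and we obtain $b \in A + \overline{[B,A]}$ as required.

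Next, fix $n \in N_f(A)$, so $n^2 = 0$. The positive element $nn^*$ lies in $A$ and satisfies the key identity $n \cdot nn^* = n^2 n^* = 0$. For each $\epsilon > 0$ I would define $a_\epsilon := f_\epsilon(nn^*) \in A$ by continuous functional calculus, where $f_\epsilon \in C([0,\|nn^*\|])$ satisfies $f_\epsilon(0) = 0$ and $f_\epsilon(t) = 1$ for $t \geq \epsilon$. Uniform approximation of $f_\epsilon$ by polynomials without constant term yields $n a_\epsilon = 0$ exactly. A $C^*$-identity computation combined with functional calculus gives
\begin{equation*}
\|a_\epsilon n - n\|^2 \;=\; \bigl\|h_\epsilon(nn^*)\bigr\| \;\leq\; \epsilon, \qquad h_\epsilon(t) := t\bigl(1 - f_\epsilon(t)\bigr)^2,
\end{equation*}
so $a_\epsilon n \to n$ in norm. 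Hence $[a_\epsilon, n] = a_\epsilon n - n a_\epsilon = a_\epsilon n \to n$, placing $n \in \overline{[B,A]}$.

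The main obstacle is step two: the abelian argument uses a function on the pure state space $X$ supported on $s(n)$ and vanishing on $s(n^*)$, which works because $n^2 = 0$ forces $s(n) \cap s(n^*) = \emptyset$. In the non-commutative case the support projections of $nn^*$ and $n^*n$ need not lie in $A$, so I would replace the separating function by continuous functional calculus on $nn^* \in A$, with the identity $n \cdot nn^* = 0$ substituting for disjointness of supports. Care is needed to choose $f_\epsilon$ so that both $n a_\epsilon = 0$ holds exactly and $a_\epsilon n \to n$ in norm, but both follow cleanly from the explicit shape of $f_\epsilon$ given above.
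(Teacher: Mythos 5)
Your proof is correct, and it is substantially more complete than the paper's own proof, which consists only of three bullet points ("$A$ is regular in $B$", "$\kernel P + A = B$", "$B$ is an $A$-bimodule") with no argument connecting them to the conclusion. Your opening step — writing $b = P(b) + (b - P(b))$ and using the diagonal hypothesis $\kernel P = \ls N_f(A)$ — is exactly the decomposition the paper's second bullet alludes to, so the skeleton is the same. What you add is the only genuinely non-trivial content of the lemma: a proof that $N_f(A) \subset \overline{[B,A]}$, which the paper's vague appeal to regularity and the bimodule structure never supplies. Your functional-calculus argument (set $a_\epsilon = f_\epsilon(nn^*)$, get $n a_\epsilon = 0$ exactly from $n \cdot nn^* = n^2 n^* = 0$, and $a_\epsilon n \to n$ from the $C^*$-identity, so $[a_\epsilon, n] = a_\epsilon n \to n$) is the right non-commutative replacement for Kumjian's abelian separating-function argument, exactly as you diagnose in your final paragraph. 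One point you should make explicit: the assertion $nn^* \in A$ does not follow from Definition \ref{N} alone, since normalisers only satisfy $nAn^* \subset A$; it follows because Definition \ref{A in B} requires $A$ to contain an approximate unit $(e_\lambda)$ for $B$, whence $nn^* = \lim_\lambda n e_\lambda n^* \in A$ because $n e_\lambda n^* \in nAn^* \subset A$ and $A$ is closed. With that one line added, your proof is complete and self-contained, which the paper's is not.
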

\begin{proof}
 This follows from: 
 \begin{itemize}
  \item $A$ is regular in $B$;
  \item $\kernel P + A =B$;
  \item $B$ is an $A$-bimodule.
 \end{itemize}
\end{proof}

\begin{examples} \label{Examples diag}
Here are some simple examples of non-commutative Cartan pairs to illustrate the previous definition.
\begin{enumerate} 
 \item Consider example \ref{Example discrete FB}, (imprimitivity Fell bundle), where $B=C^*(E)$ and
$A=C^*(E^0)$, building on example 3.5(2). Since $E$
is saturated, $A$ is regular in $B$. Identify the pure state space $X$ of $A$ with the (discrete) base
space $X$ of the C*-bundle $E^0$. There is a unique faithful conditional expectation of $B$
onto $A$, given by, 
\begin{equation}
 P(b) = \sum_i^m p_i b p_i
\end{equation}
where $(p_i,...,p_m)$ is an $m$-tuple of pairwise orthogonal projections, as in example \ref{Examples
P}(2). The
rank of each projection $(p_i,...,p_m)$ is given by the dimension of each fibre, 
\begin{equation}
 E^0_{ \{ x \} } =  p_{ \{ x \} } B p_{ \{ x \} }.
\end{equation}
In other words, we define the set $(p_i,...,p_m)$ to be the maximal projections in the
C*-algebras $E^0_i$. Clearly, $P: B \to A$ is unique.
   \item Consider a locally trivial saturated Fell bundle $(E,\pi,G)$ (as in \ref{FB example 1}(b)),
where $B=C^*(E)$ and $A=C^*(E^0)$ and let $X$ be a discrete space. Let $\{ p_x \}_{x \in X}$ be an
$n$-tuple of pairwise orthogonal projections in $A$ such that 
\begin{equation}
 \{ E^0_x \}_{x \in X} = \{~ p_x E^0 p_x ~~ \vert ~~ \rank p_x = \dim E^0_x, ~~\forall x \in X ~\}
\end{equation}
One identifies the fibres $E^0_x = p_x E^0 p_x = \ls_{a \in A} \{ p_x a p_x \}$ of
$E^0$ with the unitary equivalence classes of irreducible representations, or pure
states, of $A$. Since $E^0$ is a trivial C*-bundle over $X$, the
projections $p_x$ are of constant rank, $\rank p_x = \dim E^0_x, ~~\forall x \in X$. From this we
interpret the probability measure to be unique on $X$. Now one makes the generalisation for
locally trivial C*-bundles $E^0$ over locally compact spaces $X$, because $X$ has unique probability
measure: the direct generalisation of equation \eqref{P nc} is, 
\begin{equation} 
 P(b) = \int_X \alpha_x(b) dx
\end{equation}
so that $P$ is the unique faithful conditional expectation from $B$ onto $A$. (Note that $\alpha$
determines an action of the groupoid $G$ on $B$, in which $\alpha_{g^*g}(b) := 0$ when $b$ is not a
member of $A$.)
\end{enumerate}
\end{examples}

\section{C*-bundle dynamical systems}

In this section, C*-bundle dynamical systems are introduced and their relationship to other
C*-dynamical
systems is studied, especially Arveson's $A$-dynamical systems.  The key differences that distinguish
C*-bundle dynamical systems from more general C*-dynamical systems are, (i) the covariance condition and
(ii) additional geometrical data encoded in the action of a 1-parameter group of diffeomorphisms
lifted to the C*-bundle. We include a discussion of the advantages to physics.

\medskip

Orientability of Fell bundles (and spectral
triples) is commented on and also connections between Exel's slices and Arveson's paths, are
discussed below. 

\medskip

A more direct
application of C*-bundle dynamical systems to C*-subalgebra theory is developed later in the
final section.

\subsection{Three families of C*-dynamical system}

To begin with, recall some fundamental definitions from (non-relativistic) non-commutative
C*-dynamical systems following the work of Raeburn \cite{Raeburn}, Williams,
Arveson and many others and then we modify them for the case of an action on a C*-bundle:

\begin{definition} \label{CDS}
 A C*-dynamical system is a triple $(B,\G,\alpha)$ where $\alpha$ is a strongly continuous action of a
locally compact group $\G$ on a C*-algebra $B$.
\end{definition}

For the action, we make a choice of a representation of $\G$, \\ 
$\alpha_g(b) = U_g b U_g^*$ for each $b \in \B$, where:
 
 \begin{equation}
  U_gU_h = \omega(g,h)U_{gh}
 \end{equation}
   for a 2-cocycle $\omega$, 
 \begin{equation}
  \omega : \G \times \G \to \mathbb{T}
 \end{equation}
  giving an action $\alpha$ of $\G$ on $\B$:

\begin{equation}
 (g_1,b_1)(g_1,b_2) = (g_1g_2, \alpha_{g_1}(b_1) b_2) \in \G \ltimes \B
\end{equation}

Note that $\omega$ measures the departure of the assignment $g \mapsto U_g$ from being a group
homomorphism i.e. representation of $\G$. The elements of $H^2(\G,\mathbb{T})$ consist of
equivalence classes of 2-cocycles $[\omega]$  (equivalent if they only differ by a coboundary). For
details see \cite{Raeburn}.

\medskip


In the previous definition \ref{CDS}, the dynamical system is reversible. (A reversible dynamical system
is characterised by an action of a group and an irreversible system by an action of an
inverse semigroup, exactly because a group has inverses and an inverse semigroup only has
quasi-inverses.) 

\medskip
 
Next we study Arveson's $A$-dynamical systems, which take into account the structure of C*-algebras in
terms of their C*-subalgebras. Since we are using the symbol $B$ for the algebra instead of $A$, we can
think of A as standing for Arveson, rather than change to $B$-dynamical system.
 
 \begin{definition}\cite{Arveson} \label{A-DS}
  Arveson's \emph{A-dynamical system} is a triple $(\iota, B, \alpha)$ consisting of a semigroup $\alpha
= \{
\alpha_t : t \geq 0 \}$ of *-endomorphisms acting on a C*-algebra $B$ and an injective *-homomorphism
$\iota: A \to B$, such that $B$ is generated by $\cup_{t \geq 0} \alpha_t(\iota(A))$, where $A$ is a
C*-subalgebra of $B$.
 \end{definition}
In Arveson's definition there is no continuity requirement of the semigroup with respect to $t$. Of
course in the case of a \emph{reversible} $A$-dynamical system the $\alpha_t$ will form a group.

\medskip

$B$ is the norm-closed linear span of finite products,

\begin{equation}
 B = \ls \{ \alpha_{t_1} (a_1) \alpha_{t_2} (a_2)...\alpha_{t_k} (a_k) \}
\end{equation}
where $t_1,...t_k \geq 0$, $a_1,...,a_k \in A$, $k=1,2,...$.
For different times $t_1 \neq t_2$, the C*-algebras $\alpha_{t_1} (A)$ and  $\alpha_{t_2} (A)$
do not commute with eachother.

\medskip

Let $M$ be a smooth (or possibly discrete) manifold. Our modification to C*-bundles:-
\begin{definition} \label{CBDS}
 A \emph{reversible C*-bundle dynamical system} $(E^0,\G_{\sigma})$ is given by a C*-bundle
$(E^0,\pi,M)$ over a differentiable manifold $M$ and a strongly continuous 1-parameter covariance group
$\G_{\sigma}$ of C*-bundle spatial automorphisms. ($\G_{\sigma}$ was defined earlier in
\ref{covariance group}.)
\end{definition} 

\begin{theorem}
 Reversible C*-bundle dynamical systems provide a class of examples of reversible A-dynamical
systems. 
\end{theorem}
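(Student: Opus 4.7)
The plan is to take the natural construction in which the ambient algebra $B$ is manufactured inside $\B(\H)$ from $A = C^*(E^0)$ and the unitaries implementing $\G_{\sigma}$, and then check each clause of Definition \ref{A-DS}. Concretely, represent $A$ on $\H = L^2(E^0)$, and for each $\lambda \in \mathbb{R}$ let $U_\lambda \in \U(\H)$ be the unitary provided by the strongly continuous representation $\rho_U : \{g(\lambda)\}_{\lambda \in \mathbb{R}} \to \SpatialAut(E^0)$, so that $\alpha_\lambda(a) = U_\lambda a U_\lambda^*$ for $a \in A$. Because $\rho_U$ is a group homomorphism, we have $U_\lambda U_\mu = U_{\lambda + \mu}$ and consequently $\alpha_\lambda \circ \alpha_\mu = \alpha_{\lambda + \mu}$.

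Next I would define the candidate ambient algebra by
\begin{equation}
B \;:=\; \ls \bigl\{\, \alpha_{\lambda_1}(a_1)\,\alpha_{\lambda_2}(a_2)\cdots \alpha_{\lambda_k}(a_k) \;:\; \lambda_i \in \mathbb{R},\; a_i \in A,\; k \geq 1 \,\bigr\} \;\subset\; \B(\H),
\end{equation}
and take $\iota : A \hookrightarrow B$ to be the inclusion, which is an injective $*$-homomorphism by construction. By definition $B$ is then generated by $\bigcup_{\lambda \in \mathbb{R}} \alpha_\lambda(\iota(A))$, matching the generation clause of Definition \ref{A-DS}.

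The key verification is that each $\alpha_\lambda$ extends from $A$ to a $*$-automorphism of $B$. On a generating product one computes
\begin{equation}
U_\lambda \bigl(\alpha_{\lambda_1}(a_1)\cdots \alpha_{\lambda_k}(a_k)\bigr) U_\lambda^*
\;=\; \alpha_{\lambda+\lambda_1}(a_1)\cdots \alpha_{\lambda+\lambda_k}(a_k),
\end{equation}
which again lies in the generating set of $B$; hence conjugation by $U_\lambda$ restricts to a norm-preserving $*$-endomorphism of $B$, with inverse given by conjugation by $U_{-\lambda}$, so it is an automorphism. The assignment $\lambda \mapsto \alpha_\lambda$ is then a group homomorphism $\mathbb{R} \to \mathrm{Aut}(B)$, i.e.\ the reversible case, and strong continuity on $B$ is inherited from that of the unitary representation $\rho_U$ on $\H$ (even though Arveson's definition does not require it).

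The main obstacle is really only the check that $\alpha_\lambda$ preserves $B$ as a set, but this is immediate from the permutation displayed above together with the group law for $\rho_U$; a secondary point worth stating carefully is that the inclusion $\iota$ is genuinely injective since $A$ acts faithfully on $\H = L^2(E^0)$. Apart from these two observations, the proof is essentially a matter of organising the definitions and identifying $(\iota, B, \{\alpha_\lambda\}_{\lambda \in \mathbb{R}})$ as the required $A$-dynamical system attached to $(E^0, \G_\sigma)$.
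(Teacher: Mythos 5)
There is a genuine gap, and it sits exactly at the point you describe as ``immediate'': the verification that conjugation by $U_\lambda$ preserves your $B$. By Definition \ref{IA}, every element of $\G_{\sigma} \subset \SpatialAut(E^0)$ is implemented by a unitary whose adjoint action is an isometric *-isomorphism $\hat{f} : A \to A$; indeed the paper notes that $\SpatialAut(E^0)$ is a subgroup of $\mathrm{Aut}(A)$. Hence $\alpha_\lambda(A) = U_\lambda A U_\lambda^* = A$ for every $\lambda$, so each of your generators $\alpha_{\lambda_1}(a_1)\cdots\alpha_{\lambda_k}(a_k)$ is simply a product of elements of $A$, and your candidate ambient algebra collapses: $B = A$. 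What you obtain is the degenerate triple $(\mathrm{id}, A, \alpha)$ with $\alpha_\lambda \in \mathrm{Aut}(A)$. This satisfies the letter of Definition \ref{A-DS} trivially (any C*-algebra equipped with any one-parameter automorphism group does), but it exhibits no interesting class of examples and loses precisely the structure the theorem is after: the discussion following Definition \ref{A-DS} requires the translates $\alpha_{t_1}(A)$ and $\alpha_{t_2}(A)$ to be distinct, mutually non-commuting subalgebras of a strictly larger $B$, and the remark right after the paper's proof stresses the ``clear distinction between the configuration algebra $A$ and the observable algebra $B$.''

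The missing idea is that $B$ must contain the unitaries themselves (or at least the slices $A U_g$), not merely the conjugates of $A$. The paper's proof takes $B = \G \ltimes_{\alpha} A$, the crossed product by $\G = \Bis(G) \cong \Diff(M)$ acting through the representation $\rho_U$ associated to $\G_{\sigma}$ (this is where the minimal-flow hypothesis on $\G_{\sigma}$ is used), embeds $A$ by $\iota$, and realizes the Arveson translates $\alpha_t(\iota(A))$ not as images of automorphisms of $A$ but as the slices $\M = \ls_{a \in A} a\, U_g$ with $U_g = \rho_U\vert_{g \in \Bisherm}$; these subspaces genuinely vary with $t$ and their union generates $B$. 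No argument along the lines you propose can be repaired without enlarging $B$ in this way. A secondary inaccuracy: $\rho_U$ being a homomorphism into $\SpatialAut(E^0)$ only gives $\mathrm{Ad}(U_\lambda U_\mu) = \mathrm{Ad}(U_{\lambda+\mu})$, not $U_\lambda U_\mu = U_{\lambda+\mu}$; the paper explicitly allows a 2-cocycle twist $U_g U_h = \omega(g,h) U_{gh}$. That twist is harmless at the level of automorphisms, but in the crossed-product picture it is part of the data and should not be assumed away.
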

\begin{proof}
 Let $(E^0,\G_{\sigma})$ be a C*-bundle dynamical system with enveloping algebra $A=C^*(E^0)$ where
$E^0$ is a C*-bundle over the pure state space
$X$ of $A$, and let $G= X \times X$ (we assume that $X$ is a locally compact Hausdorff space $M$
admitting a differentiable
structure). The ambient algebra $B$ for the A-dynamical system
is given by $B=\G \ltimes_{\alpha} A$ where the action $\alpha$ of the group $\G = \Bis(G)$ on $A$ is
determined by the unitary representation $\rho_U$ on $E^0$ associated to $\G_{\sigma}$ as follows. See
also the earlier
explanation of these terms in section \ref{spatial section}. Since $\G_{\sigma}$ acts minimally
(is densely transitive) on the space of fibres $\{ E^0_x \}_{x \in M}$ of $E^0$, we can associate to it
a (strongly constinuous) unitary representation $\rho_U : \Bis(G) \to \SpatialAut(E^0)$, where
$G=M \times M$ and $\Bis(G)$ is identified with $\Diff(M)$.

The slices $\alpha_t(a)$ are then obtained from the representation $\rho_U$, by restricting $\rho_U$
to
$g \in \Bis_{\mathrm{herm}}(G)$ (definition \ref{Bisherm}) and then forming the closed linear span,
$\M = \ls_{a \in A} a U_g$ for $U_g = \rho_U \vert_{g \in \Bisherm}$. The
action $\alpha$ extends to an action $\alpha = \{ \alpha_t : t \geq 0 \}$ on $B$ such that $\cup_{t \geq
0} \alpha_t(\iota(A))$ generates $B$ with $\iota: A \to B$ an injective *-homomorphism. 
\end{proof}

Note that in a C*-bundle
dynamical system, there is a clear distinction between the configuration algebra $A$ and the observable
algebra $B$, whereas in an A-dynamical system, the $\alpha_t$ are interpreted as
alternative configuration spaces. 

\medskip

In an \emph{irreversible} C*-bundle dynamical system, the covariance group $G_{\sigma}$ should be
replaced by a inverse semigroup of *-endomorphisms of $E^0$ which preserve the bundle structure of
$E^0$. The ambient
Fell bundle $E$ will then be regular but not saturated and the fibres of $E^0$ will not usually
be isomorphic. We begin a discussion on the generalisation to the irreversible case in an additional
section, with which we close this paper.

\subsection{A remark on Arveson paths and Exel slices} \label{slice}


Let $A$ be a regular C*-subalgebra in a C*-algebra $B$,
\begin{definition} \cite{non com cartan}
  A \emph{slice} is any closed linear space $\M
\subseteq N(A) \subset B$ such that both $A \M$ and $\M A$ are contained in $\M$.
\end{definition}

Observe that the $\alpha_t(a)$ in definition \ref{A-DS} are closed linear subspaces $\M$ of $N(A)$ such
that $A\M = \M$
and $\M A = \M$. Since $\M^* \M = A$ and $\M \M^*=A$, each $\M$ is a Hilbert $A$-bimodule. This means
that for each $t$, $\alpha_t(a)$ satisfies the definition of a \emph{slice} $\M$.

\medskip

The unital saturated Fell bundle over a discrete groupoid of example \ref{Example discrete FB}
describes a C*-category of Morita equivalence
(or imprimitivity) bimodules. The
space of fibres of the Fell bundle $(E,\pi,G)$ (as opposed to the diagonal bundle $E^0$) therefore forms
a structure that obeys
a set of rules equivalent to the defining axioms for groupoids but up to isomorphism. (Including
$E_g
\otimes E_{g^*} \cong E_{gg^*}$, $E_{g_1} \otimes E_{g_2} \cong E_{g_1 g_2} ~ \forall (g_1,g_2) \in
G^2$, $E_g \otimes E_{g^*g} \cong E_g$, $E_{gg^*} \otimes E_g \cong E_g$.) Finally observe
that the Arveson ``path'' (see \cite{Arveson paths}) defined by a closed linear space
$\alpha_t(a)$, is exactly a self-adjoint bisection of this ``weak'' groupoid. 

\medskip

Let $\sigma$ denote the generator of $\G_{\sigma}$. Note that $\sigma = \rho_U
(g(\lambda_0))$. In the case that $M$ is discrete and $\sigma$ is a bounded linear operator, the slices
$\M$ or Arveson paths are given by $\alpha_t(a) = \ls_{a \in A} [\sigma, a]$. (We leave the association
to the non-commutative differential calculus for finite spectral triples $(A,\H,D = \sigma)$ for another
chapter.)



\section{Embeddings} \label{Embeddings section}

Following Kumjian's
observation that the normalisers $N(A)$ of $A$ in $B$ characterise the way $A$ \emph{embeds}
in $B$ (\cite{C*diag}), we define an ``embedding
invariant'' $\Phi_{\hookrightarrow}$ to be equivalent to a groupoid 2-cocycle $\omega$, and involving
$N(A)$. This characterises the crossed product algebras of the form $B = G
\ltimes A$ for $G=X \times X$, the pair groupoid over the state space $X$ of a possibly non-commutative
C*-algebra $A$ such that $A = C^*(E^0)$ for some C*-bundle $E^0$ with fibre
$M_n(\bbc)$. C*-bundles over discrete spaces in which the dimension of the fibres is
not constant, are also included.

 \subsection{Embedding invariant}

 Since $\Phi_{\hookrightarrow}$ is to be constructed from a groupoid 2-cocycle $\omega$, we first
explain the relevance of the latter.

\medskip

Renault obtained a characterisation of Cartan pairs $(A,B)$ (definition \ref{Renault def}) in the form
of a groupoid 2-cocycle, 
\begin{equation}
 \omega: G_{ep} \times G_{ep} \to \mathbb{T}
\end{equation}
where $G_{ep}$ is an essentially principal
groupoid and $\mathbb{T}$ denotes the 1-dimensional unitary matrices. It follows that for each example
of a Cartan pair $(A,B)$ (definition \ref{Renault def}) one can define the multiplication in $B$ from
this 2-cocycle data in terms of a
semidirect product of $G_{ep}$ with $A$, as demonstrated in example \ref{FB example 1} (saturated Fell
bundle). So a
Fell line bundle over an essentially principal groupoid $G_{ep}$, defined with an action $\alpha$ is
equivalent to an essentially principal groupoid $G_{ep}$ together with a 2-cocycle $\omega$.

\medskip

More in detail, a twisted action $(\alpha, \omega)$
is given by an assignment of unitaries $g \mapsto u_g \in N(A)$ to each $g \in G_{ep}$, which does not
define a
representation of $G_{ep}$ because 
\begin{equation} \label{om}
 u_g u_h = \omega(g,h) u_{gh}
\end{equation}
where $u_{gh}$ is the composition of two unitaries $u_h \circ u_g$
implementing the action $\alpha$ of $G_{ep}$ on $A$, or equivalently, the representation of $G_{ep}$ on
$E^0$. Therefore, an embedding invariant
$\Phi_{\hookrightarrow}$
equivalent to a 2-cocycle mapping $\omega: G_{ep} \times G_{ep} \to \mathbb{T}$, can be presented as a
continuous assignment of
unitaries $g \mapsto u_g \in N(A)$ satisfying \eqref{om}.  

\medskip

In the special case where the groupoid $G$ is principal, then $A$ is diagonal in $B$. Note that since
given any
C*-diagonal $A$ in $B$, a Fell line
bundle $E$ over $G$ (as shown in \ref{FB example 1}) can be found such that
$A = C^*(E^0)$ and $B=C^*(E))$, the action $\alpha$ of $G$ on $A$, which controls the
multiplication in $E$, corresponds to a representation
$\rho_G$ of $G$ on $E^0$. Secondly, from $\rho_G$ one obtains a unitary representation $\rho_U :
\Bis(\rho_G) \to \SpatialAut(E^0)$ implemented by a mapping $g \mapsto U_g \in N(A)$ for each $g \in
\Bis(G)$, (where the mapping $g \mapsto U_g$ is induced directly from the previous mapping $g \mapsto
u_g$ for each $g \in G$). 

\medskip

Let $G$ be a groupoid and let $\G$ be the group $\G = \Bis(G)$. Note that it may be the case that a
group 2-cocyle for $\G$ may be a coboundary (that is, defines a representation of $G$, with $U_g U_h =
U_{gh}$) while a groupoid 2-cocyle is not, that is, it satisfies $u_g u_h = \omega(g,h) u_{gh}$ for
non-trivial $\omega$. 

\medskip

Now we modify proceedings to the non-commutative case. Let $A$ be a not necessarily commutative diagonal
C*-subalgebra in a C*-algebra
$B$. Making use of the normalising set $N(A) \subset B$, below we define the embedding invariant
$\Phi_{\hookrightarrow}$ to characterise such pairs $(A,B)$.  In the case
that $A=C^*(E^0)$ for $E^0$ a C*-bundle with isomorphic fibres, we define $\Phi_{\hookrightarrow}$ to be
equivalent to a principal groupoid 2-cocycle,

\begin{equation} \label{principal}
 \omega : G \times G \to \bigoplus_{i=1}^n \mathbb{T}.
\end{equation}
where $n$ is the dimension of the fibre of $E^0$. 

\begin{definition} \label{embedding}

Let $A$ be a (possibly non-commutative) C*-diagonal in a C*-algebra $B$ where both $A$ and $B$ operate
on a separable Hilbert space $\H$. Since $\H$ is separable, it has a countable orthonormal basis  $\{p_i
\}_{i \in X}$ whose linear span is dense in $\H$.

\medskip

Let $u_{(i,j \neq i)} \in N_f(A)$ and $u_{(i,i)} \in A$ satisfy:
\begin{equation}
  p_k u_{(i,j)} p_l =  \begin{cases}
   u_{ij}, & \text{if $(k,l) = (i,j)$}.\\
    0, & \text{otherwise}.
  \end{cases}
\end{equation}
where each $u_{ij}$ is a partial isometry,  $u_{ij} \H_j \to \H_i$ where $\H_i=p_i \H$, $\H_j = p_j \H$.
(In the case that the rank of the projections $p_i$ is constant, each $u_{ij} : \H_j \to \H_i$ is an
isometry.)

\medskip

Given a choice of action $\alpha$, the
\emph{diagonal embedding} $\Phi_{\hookrightarrow}$ of $A$ in $B$ is fixed by an
approximation of a maximal rank element of $B$:-

\begin{equation} \label{phi G equation}
 \Phi_{\hookrightarrow}~=~ \{~~~ \sum_{(i,j)} u_{(i,j)}  ~ ~~\vert ~~~(i,j) \in Y \times Y \subset X
\times X ~~\} ~~\in B 
\end{equation}
for all subsets $Y \times Y \subset X \times X$.


\end{definition}




The above definition may be  generalised to Cartan pairs by replacing the pair
groupoid $\{ p_i \} \times \{ p_i \}$,  with the effective pair groupoid, which
can be constructed from $N(A)$ using \eqref{partial homeo}: $G_{ep}$ is the groupoid of germs of the
partial homeomorphisms defined by \eqref{partial homeo}. In this case, even if $\H$ is finite
dimensional, then $\Phi_{\hookrightarrow}$ can no longer be presented as matrix. We include this
generalisation as an Appendix.


\begin{examples} \label{Examples Phi}
 \begin{enumerate}
  \item Let $A$ be a non-commutative finite dimensional C*-algebra, which is diagonal in a
C*-algebra $B$. This automatically provides a C*-bundle over $X$, as described in
3.5(2) and where $X$ is interpreted as the space of pure states of $A$. 
  \begin{equation} \label{previous}
   \Phi_{\hookrightarrow} = \sum_{(x,y) \in X \times X} u_{(x,y)} ~~~(x,y) \in X \times X ~~~ \in B
  \end{equation}
with
\begin{equation}
 u_{xy} = p_x u_{(x,y)} p_y ~~~ \forall (x,y) \in X \times X.
\end{equation}
where each $u_{xy}$ is a partial isometry $u_{xy} \in M_{rs}(\bbc)$ where $r= \rank
p_x$ and  $s= \rank p_y$. In this case, $\Phi_{\hookrightarrow}$ is  a
state-transition matrix for algebraic quantum gravity.
\item Let $E^0$ be a locally trivial C*-bundle over $X$ with non-commutative fibres and let $E$ be a
Fell bundle over $G=X \times X$ as in example \ref{FB example 1}(b) (Saturated Fell bundle). 
$\Phi_{\hookrightarrow}$ as in equation \eqref{phi G equation} where each $u_{ij}$ is a unitary
matrix whose rank and dimension is equal to the dimension of the fibres
of $E^0$.
\item Let $E^0$ be a 1-dimensional C*-bundle over a compact space $X$ where $B=C^*(E)$ and
$A=C^*(E^0)$. Identify $X$ with the image of $X(A)$. Each $u_{ij} \in \mathbb{T}$ where $\mathbb{T}$
denotes the 1-dimensional unitary matrices. 
 \end{enumerate}
\end{examples}

\begin{comment} \label{embed}
 This notion of embedding for the context of C*-algebras is motivated by the fact that in topology, an
embedding is
a homeomorphism onto its image. In analogy, the way a C*-algebra $A$ embeds in a C*-algebra $B$ is
determined by inner automorphisms (non-commutative homeomorphisms) implemented by the unitary
normalisers of $A$ in $B$.
\end{comment}

\begin{remark}
 In summary, the embedding invariant $\Phi_{\hookrightarrow}$ provides all the information about the
Fell
bundle and the way in which $A$ embeds in $B$. It might be helpful to keep in mind its form as a
transition matrix. The following data can be read-off directly from  $\Phi_{\hookrightarrow}$: $\alpha$,
$\omega$, $(E,\pi,G)$, $(E^0,\pi,G_0)$, $A \subset B$, $N(A)$, $N_U(A)$, $P : B \to A$,
$\rho_U$ and $\rho(G)$.
\end{remark}

Given a possibly non-commutative diagonal C*-subalgebra $A$ in $B$, the embedding invariant
$\Phi_{\hookrightarrow}$ can be generated by a covariance subgroup $\G_{\sigma}$ of the associated
unitary representation $\rho_U$, as follows.

\begin{theorem}
Let $(E^0,\G_{\sigma})$ be a C*-bundle dynamical system and let $(E,\pi,G)$ be a saturated Fell bundle
over a principal groupoid over a (possibly discrete) differentiable manifold $M$. The embedding
invariant $\Phi_{\hookrightarrow}$ associated to $(A,B)$  can be
constructed from the abelian subgroups $G_{\sigma} \subset \rho_U$. 
\end{theorem}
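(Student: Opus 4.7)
The plan is to decompose each unitary arising from an abelian covariance subgroup $\G_{\sigma}$ into partial isometries indexed by pairs of fibres of $E^0$, and then assemble these partial isometries into the sum \eqref{phi G equation} that defines $\Phi_{\hookrightarrow}$. First, fix an orthonormal family of projections $\{p_i\}_{i \in X}$ in $A$ corresponding to the fibres $E^0_x = p_x A p_x$. For each $g \in \G_{\sigma}$, the spatial automorphism $(f_0,f,U_g)$ is implemented (by Theorem \ref{N(A)=InnAut}, applicable because $E$ is saturated and hence $A$ is regular in $B$ via Proposition \ref{regular}) by a unitary normaliser $U_g \in N_U(A) \subset B$. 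The matrix elements $p_i U_g p_j$ are partial isometries $\H_j \to \H_i$, nonzero precisely on the graph of the diffeomorphism $f_0$; by Definition \ref{embedding} these are exactly the candidates for the components $u_{(i,j)}$ of $\Phi_{\hookrightarrow}$.

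Second, assemble: the minimal flow hypothesis on $\{g(\lambda)\}_{\lambda \in \mathbb{R}}$ guarantees that $\{(x, f_0(x)) : g \in \G_{\sigma}\}$ is dense in $M \times M$, and by varying over the abelian subgroups $\G_{\sigma} \subset \rho_U$ we sweep out dense subsets of $X \times X$ by different 1-parameter flows. For each pair $(i,j) \in Y \times Y$ with $Y$ a finite subset of $X$, pick $g$ in a suitable $\G_{\sigma}$ with $f_0(j) = i$ and set $u_{(i,j)} := p_i U_g p_j$. Forming the finite sum $\sum_{(i,j) \in Y \times Y} u_{(i,j)}$ produces precisely the element of $B$ required by \eqref{phi G equation}. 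The 2-cocycle $\omega$ from \eqref{om}, \eqref{principal} is simultaneously recovered from the relation $U_g U_h = \omega(g,h) U_{gh}$ inherent in $\rho_U$, confirming that the reconstruction is consistent with the equivalence of $\Phi_{\hookrightarrow}$ to a principal groupoid 2-cocycle.

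The principal obstacle is the density step: a single abelian subgroup $\G_{\sigma}$ does not literally realise unitaries indexed by every pair $(i,j)$, only those on the dense orbit of its flow, so the argument requires either choosing finitely many abelian subgroups whose combined graphs cover each required $Y \times Y$, or invoking saturation of $E$ to approximate the missing partial isometries inside $\ls N(A) = B$. In the discrete setting of Example \ref{Examples diag}(1) this is a direct pointwise construction; in the continuous setting one takes closed linear spans and passes to limits, which is where the strong continuity of $\rho_U$ and the approximation property of $\ls N(A)$ enter. Once this density/approximation step is settled, the identification of $\Phi_{\hookrightarrow}$ as the transition matrix $\sum_{(i,j)} u_{(i,j)}$ follows directly from the decomposition in the first paragraph.
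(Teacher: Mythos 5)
Your proposal is correct and follows essentially the same route as the paper's proof: the discrete case is handled by using the transitive (minimal) flow of a covariance group to connect every pair of fibres, and the general case by regularity, $B = \ls N(A)$, approximating $\Phi_{\hookrightarrow}$ by finite linear combinations of products of normalisers. The only real difference is presentational: the paper writes $\Phi_{\hookrightarrow}$ directly as the sum of self-multiplications $\sum_{m=1}^n \prod_{i=1}^m U_{g_i}$ of the single generating unitary $\rho_U(g(\lambda_0))$ of one minimal $\G_{\sigma}$, as in \eqref{partition}, rather than extracting the fibrewise blocks $p_i U_g p_j$ and reassembling them pair by pair across several abelian subgroups.
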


\begin{proof}
(i) Consider first a discrete space $M$ with $n$ points $x \in M$. Then by inspection of definition
\ref{embedding},


\begin{equation} \label{partition}
\Phi_{\hookrightarrow} = \sum_{m=1}^n \prod_{i=1}^m U_{g_i}, ~~~ i=1,...,m, ~~m=1,...,n.
\end{equation}

because since $\Diff(M)$ is generated by a finite diffeomorphism $g$ or $g(\lambda_0))$, a 1-parameter
group of diffeomorphisms is a finite product group, that is, $ \{ g(\lambda) \}_{\lambda \in
\mathbb{Z}}$ and since $\sigma = \rho_U(g(\lambda_0))$, elements $(f_0,f,U_{f_0})$ or
$(g(\lambda),f,U_{g(\lambda)})$ of $\G_{\sigma}$ are obtained by self-multiplications of
$\rho_U(g(\lambda_0)$. 

More generally, since $A$ is regular in $B$, we use the fact that $B = \ls N(A)$ and
approximate $\Phi_{\hookrightarrow} \in B$ by sets of finite linear combinations of finite products of
normalisers $U_{g_i}$. 
\end{proof}

\begin{remark} 
Note that $\G_{\sigma}$ and $\Phi_{\hookrightarrow}$ are embedding
invariants equivalent to a groupoid 2-cocycle $\omega$.
 \end{remark}
 
\begin{remark} \label{partition remarks}
 In quantum gravity, a partition function is a discretisation of the path integral in quantum
field theory. The algebra invariant $\Phi_{\hookrightarrow}$ is a topological invariant and plays the
r\^ole of a state-transition matrix. When diagonalised, it is
analogous to a partition function with constant weight for a spin foam on a discretised manifold, since
$\Phi_{\hookrightarrow}$ is a sum over geometrical states (or irreducible representations of the algebra
$A$) and the product is over parallel transports, where each $(f,f_0,U_{f_0}) \in \SpatialAut(E^0)$ can
be said to provide a system of parallel transports in the C*-bundle since it provides a set of isometric
*-isomorphisms between the fibres of $E^0$. 
\end{remark}

\begin{example}
For illustrative purposes, consider a discrete space $M$ with $n=4$ points. Let $G$
denote the pair groupoid over this discrete space and $\Bis(G)$ the group of global bisections. The
diagrams illustrate how we generate $\Phi_{\hookrightarrow}$ from actions of subgroups of
$\Bis(G)$ on a finite dimensional C*-bundle $E^0$ over $M$. Let $g$ be the following
transitive\footnote{In the discrete situation, there is no distinction between a minimal and
a transitive flow.} element of $\Bis(G)$, 

\vspace{2pc}
\begin{equation}
 g = \xymatrix{
 \cdot \ar@/^1pc/[r] & \cdot
 \ar@/^1pc/[r] & \cdot
 \ar@/^1pc/[r] & \cdot
 \ar@/^1pc/[lll] }
\end{equation}

\begin{equation}
 g \mapsto U_g,
\end{equation}

\begin{equation}     
U_g =
\left(   \begin{array}{cccc}
0         &          u_{12}     &   0        &   0\\
0         &          0          &   u_{23}   &   0\\
0         &          0          &   0        & u_{34}\\
u_{41}    &          0          &   0        &  0
\end{array}  \right)
\end{equation}
\vspace{1pc}
\begin{equation}
g \circ g = \xymatrix{
 \cdot \ar@/^1pc/[rr] & \cdot \ar@/^1pc/[rr] & \cdot \ar@/^1pc/[ll] & \cdot \ar@/^1pc/[ll] }
\end{equation}

\begin{equation}
 g \circ g \mapsto U_{g \circ g}
\end{equation}


\begin{equation}
 U_{g \circ g}=\left(   \begin{array}{cccc}
0         &          0          &   u_{13}   &   0\\
0         &          0          &   0        &   u_{24}\\
u_{31}    &          0          &   0        &   0\\
0         &          u_{42}     &   0        &  0
\end{array}  \right)
\end{equation}

\vspace{1pc}
where for example, $u_{12}u_{23} = \omega(12,23)u_{13}$.

The self-adjoint elements of $\Bis(G)$ include:

\vspace{2pc}
\begin{equation}
 \xymatrix{
  \cdot \ar@/^2pc/[rrr] & 
  \cdot \ar@/^1pc/[r] & 
  \cdot \ar@/^1pc/[l]  & 
  \cdot \ar@/^2pc/[lll]   }
\end{equation}

\vspace{2pc}
\begin{equation}
 \xymatrix{
 \cdot \ar@/^1pc/[rr] & \cdot \ar@/^1pc/[rr] & \cdot \ar@/^1pc/[ll] & \cdot \ar@/^1pc/[ll] }
\end{equation}

\vspace{2pc}
\begin{equation}
 \xymatrix{
 \cdot \ar@/^1pc/[r] & \cdot
       \ar@/^1pc/[l] & & \cdot
       \ar@/^1pc/[r] & \cdot
       \ar@/^1pc/[l] }
\end{equation}
\end{example}
\vspace{1pc}

\begin{comment}[Orientability]
We have been implicitly assuming that $E^0$ is orientable since we have been interpreting
$\Phi_{\hookrightarrow}$ as a
nowhere vanishing global section. If $E^0$ is not orientable then 
$\Phi_{\hookrightarrow}$ is not an element of $B$. In that case, to treat $\Phi_{\hookrightarrow}$ with
a topological twisting of the bundle, we will have to insert $u_gu_h = \omega(1-u_{gh})$ for a pair of
elements
$g,h \in G$. If $(C^*(E^0), \H, \sigma, \gamma)$ is an even spectral triple $(A, \H, D, \gamma)$, then
the orientability condition for spectral triples $D \gamma = \gamma D$ \cite{gravity} (which is
automatically satisfied for $D=\sigma$ \cite{spectral C}) might be replaced by the condition that $E^0$
is orientable: that $\Phi_{\hookrightarrow}$ is an element of $B$. (See \cite{Christoph} for discussions
on the orientability condition for spectral triples.) In the context of the non-commutative standard
model and Fell bundles (\cite{spectral C}), a transition matrix (to counterpart
$\Phi_{\hookrightarrow}$) for a non-orientable Fell bundle might
indicate a topological defect in the vacuum manifold. 
\end{comment}


\section{C*-bundle dynamical systems and Fell bundles} \label{above}


Earlier in section \ref{non-ab}, we recalled some important results by
Kumjian,
 Renault and Exel on C*-subalgebras. More specifically, Kumjian established the correspondence between
pairs of C*-algebras $(A,B)$ such that $A$ is a C*-diagonal in $B$ and twisted Fell line bundles $E$
over principal groupoids $G$, \cite{C*diag} (although Kumjian defined Fell bundles over groupoids later
in \cite{fbg}). Following some of Kumjian's techniques, Renault generalised the situation to Fell line
bundles over (locally compact Hausdorff) essentially principal groupoids $G_{ep}$, with $C_r^*(E) \cong
C_r^*(G_{ep})$, where
$C_r^*(G_{ep})$ is the reduced C*-algebra completion of the groupoid convolution algebra. He
introduced a notion of Cartan pair $(A,B)$ for the context of C*-algebras and then established a
correspondence between them and essentially principal groupoids together with a
groupoid 2-cocycle $\omega$, which he obtained by the twisted groupoid convolution product
\cite{Renault}. The crossed product algebras $B$ involved in each of
these works, which are by construction regular, take the form $B= G \ltimes A$
(\cite{C*diag}) or $B=G_{ep} \ltimes A$ (\cite{Renault}).

\medskip

Cartan subalgebras that are not C*-diagonals:

Let $G_{ep}$ be an essentially groupoid over a locally compact space $X$. Let $(E,\pi,X)$ be a
Fell bundle over $G_{ep}$. Let $(A,B)$ be a Cartan pair with faithful
conditional expectation $P: B \to A$. Recall that $A$ is diagonal in $B$ if
and only if $\kernel P = \ls N_f(A)$. Observe that in this case of a Fell bundle over an essentially
principal groupoid as opposed to a principal groupoid, the kernel of $P : C^*(E) \to C^*(E^0)$ is a
larger set than the set $\ls N_f(C^*(E^0))$. It follows that $A:=C^*(E^0)$ is a Cartan
subalgebra of $B:=C^*(E)$, and not a C*-diagonal in $C^*(E)$. This is why the scope of examples treated
in
\cite{Renault} is larger, and in turn, why the classifications of C*-subalgebras defined by each of
Kumjian and Renault, are not equivalent. Two families of these ``extra'' examples
that have regular masas and are not C*-diagonals, include the graph C*-algebras and the Cuntz algebras
\cite{Renault}. For an extensive work on the automorphisms of Cuntz algebras, see \cite{Roberto} and see
\cite{Kumjian notes} for a survey on graph C*-algebras.

\medskip

Later, in Exel's work \cite{non com cartan}, a
correspondence between crossed product C*-algebras of the form $B=\Ss \ltimes A$ and Fell
bundles over inverse semigroups $\Ss$, with enveloping algebra $C^*(E)=B$ is established, where
$A=C^*(E^0)$ is a non-commutative C*-algebra, arising from the twisted convolution algebra of
$\Ss$. Exel underlines his motivation that N. Sieben’s notion of
Fell bundles over inverse semigroups should be thought of as twisted \'etale groupoids with
non-commutative unit space. 

\medskip

In this section we treat examples of locally trivial Fell bundles over groupoids that are not
necessarily one dimensional, and so their enveloping algebras $C^*(E)$ are not convolution algebras,
neither of groupoids nor of inverse semigroups. (Refer to examples 2.5.)  These algebras
$C^*(E)$ can be thought of as tensor product algebras $B = C^*_r(G) \otimes M_n(\mathbb{C})$. The
advantage of these examples to physics includes example 2.5(5), and in the case that $G$
is discrete, we refer to arguments that a realistic notion of space-time manifold $M$ is unlikely to be
either continuous or commutative (\cite{Isham}, \cite{wqg}, \cite{qg}). Another example of a
non-commutative diagonal pair is from \cite{CDS Roberto}: Let $\Sigma = (B,\G,\alpha, \omega)$ be a
unital discrete twisted C*-dynamical system (as studied in \cite{CDS Roberto}) such that $B = A
\rtimes_{\alpha} \G$ where $A$ is a simple C*-algebra and where $\G$ is a discrete subgroup of $\Bis(G)$
for $G = M \times M$, then $(A,B)$ is a not necessarily commutative diagonal pair.


\subsection{Groupoid 2-cocycles as Cartan pair invariants}

The following generalises Renault's result that a one dimensional Fell bundle over an essentially
principal groupoid $G_{ep}$ specified by a groupoid 2-cocycle, is equivalent to his definition of a
Cartan pair $(A,B)$ of C*-algebras. In this generalised case, we consider locally trivial saturated Fell
bundles $(E,\pi,G_{ep})$ (from \cite{fbg}, saturated Fell bundles over locally compact
groupoids are equivalent to semidirect product example \ref{FB example 1}). Since these algebras
$C^*(E)$ can be thought of as tensor product algebras $B = C^*_r(G) \otimes M_n(\mathbb{C})$ it should
be possible to produce the following result using only Renault's construction but we provide these
alternative techniques because they have the potential to be generalised to non-saturated Fell
bundles over inverse semigroups and because the result leads into the next result which involves
C*-dynamical systems and the unitary representations as discussed in section \ref{spatial section}. 

\begin{theorem} \label{thm esspal}
(a) Each locally trivial Fell bundle $(E,\pi,\esspal)$ over an
essentially principal
groupoid $G_{ep}$ over a locally compact Hausdorff space $X$, gives rise to a not
necessarily abelian Cartan pair $(A,B)$ of separable C*-algebras (as in definition \ref{A in B}) such
that $B =
\esspal \ltimes_{\alpha} A$, where $\esspal$ is an essentially principal groupoid over $X(A)$. (b) All
such Cartan pairs, (with  $B = \esspal \ltimes_{\alpha} A$ for some essentially principal groupoid
$\esspal$ and some separable C*-algebra $A$), arise in this way.
\end{theorem}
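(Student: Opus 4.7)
The plan is to mimic Renault's construction of the Fell line bundle attached to a Cartan pair, but with the added flexibility that the fibres of $E^0$ may be simple matrix algebras of dimension $n \geq 1$. The slogan is that locally $C^*(E)$ looks like $C_r^*(\esspal) \otimes M_n(\bbc)$, so passing from $n=1$ to higher $n$ amounts to replacing the twist $\omega\colon G \times G \to \mathbb{T}$ by a twist into $\bigoplus_{i=1}^n \mathbb{T}$ as in \eqref{principal}, exactly the embedding invariant built in Definition~\ref{embedding}. I would also fix, once and for all, the identification of the base $X$ with the pure state space $X(A)$ via Example~2.5(6), so the two occurrences of ``essentially principal groupoid'' (over $X$ and over $X(A)$) are literally the same object.

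For direction~(a), given $(E,\pi,\esspal)$ I would set $A := C^*(E^0)$ and $B := C^*(E)$ and verify each clause of Definition~\ref{A in B} in turn. The inclusion $A \subset B$ and the approximate unit come from Example~2.5(6) applied fibrewise. Regularity of $A$ in $B$ is Proposition~\ref{regular} together with the fact that, by \cite{fbg}, a locally trivial Fell bundle over a locally compact groupoid is saturated and isomorphic to a semidirect product $\esspal \ltimes E^0$ (Example~\ref{FB example 1}); this same identification gives the crossed-product decomposition $B = \esspal \ltimes_{\alpha} A$. The faithful conditional expectation $P : B \to A$ is the restriction of sections along the inclusion $G_{ep}^0 \hookrightarrow \esspal$, exactly as in Example~\ref{Example P} but with the ``integral'' formula $P(b) = \int_X \alpha_x(b)\,dx$ of Examples~\ref{Examples diag}(2) replacing the one-dimensional version. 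The fibres of $E^0$ are all isomorphic (local triviality) simple C*-algebras, so on each fibre there is a unique tracial state, which forces the probability measure on $X$ underlying $P$ to be unique; this gives uniqueness of $P$ and hence the non-commutative Cartan-pair axiom of Definition~\ref{A in B}.

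For direction~(b), I would reverse the construction. Starting from $(A,B)$ with $B = \esspal \ltimes_{\alpha} A$ and $A$ separable, I use the representation theory of $A$ to realise $A$ as $C^*(E^0)$ for a C*-bundle $(E^0,\pi,X(A))$, again via Example~2.5(6). The action $\alpha$ of $\esspal$ on $A$ lifts, by the argument of Theorem~\ref{N(A)=InnAut}, to a groupoid representation of $\esspal$ on the fibres of $E^0$, so over each arrow $g \in \esspal$ I can define $E_g$ as the imprimitivity $E^0_{d(g)}$--$E^0_{r(g)}$-bimodule obtained by twisting $E^0_{d(g)}$ by $\alpha_g$ (this is precisely the semidirect-product recipe of Example~\ref{FB example 1}). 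The continuity of $\alpha$ and the local triviality of the underlying action on $X(A)$ give the Banach-bundle topology on $E = \bigsqcup_g E_g$ by the usual Hofmann--Fell reconstruction, and one checks axioms (1)--(10) of Definition~\ref{defining list} directly from the fact that $\alpha$ is an action by isometric $*$-isomorphisms. Finally, $A$ diagonal (resp.\ Cartan but not diagonal) forces the groupoid to be principal (resp.\ essentially principal), because the ideal $\ls N_f(A)$ in $\kernel P$ is cut out precisely by the arrows of $\esspal$ lying off the isotropy, which must then be dense.

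The main obstacle I expect is the uniqueness of the faithful conditional expectation in the non-commutative direction (a), since Exel has warned that non-commutative algebras admit many inequivalent ``probability measures.'' Local triviality of the Fell bundle and simplicity of the fibre are what allow one to trade the non-commutativity of $A$ for a genuinely unique tracial state on each fibre and hence a unique measure on $X$; without local triviality one would need to impose uniqueness of $P$ as an extra axiom, exactly as Definition~\ref{A in B} already does. A secondary technical point is matching the essentially principal groupoid on the algebraic side (arising from the germs of partial homeomorphisms \eqref{partial homeo} determined by $N(A)$) with the given $\esspal$; here I would use that, by Renault's theorem recalled in Section~\ref{non-ab}, the former is intrinsic to $(A,B)$ up to isomorphism, so the matching is automatic.
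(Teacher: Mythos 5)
Your proposal follows essentially the same route as the paper's own proof: in (a) you set $A = C^*(E^0)$, $B = C^*(E)$, identify $B$ with the semidirect product of Example \ref{FB example 1} and take $P$ to be the restriction of sections; in (b) you rebuild $E^0$ from the irreducible representations of $A$ over $X(A)$ as in Examples \ref{Ex C bundles}(6), recover the groupoid representation and action from the normalisers via Theorem \ref{N(A)=InnAut}, and reassemble $E = \esspal \ltimes_{\alpha} E^0$ with $P$ the restriction map. The skeleton, the key lemmas invoked, and both directions of the construction coincide with the paper's argument.

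Two of your supporting justifications, however, overreach and would fail as stated. First, \cite{fbg} does not assert that a locally trivial Fell bundle over a locally compact groupoid is saturated; it asserts that the \emph{saturated} Fell bundles are exactly those equivalent to semidirect products $G \ltimes E^0$. Saturation is an additional hypothesis here (the paper smuggles it in implicitly by invoking Example \ref{FB example 1}), and both your appeal to Proposition \ref{regular} for regularity and your crossed-product decomposition of $B$ depend on it, so it should be verified (it does hold when the fibres are full matrix algebras) rather than attributed to local triviality. Second, a simple C*-algebra need not carry a unique tracial state, or indeed any trace at all; uniqueness of the trace is special to matrix algebras. Your resolution of the ``main obstacle'' (uniqueness of $P$) therefore only works because the fibres in this theorem are finite dimensional, as is implicit in the target $\bigoplus_{i=1}^n \mathbb{T}$ of the 2-cocycle \eqref{principal}. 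The paper's own uniqueness argument (Examples \ref{Examples diag}(2) and the Appendix) instead rests on the equivalence of the projections $p_i$ forced by local triviality, which yields a unique probability measure on $X$; restating your argument in terms of finite-dimensional fibres, or in the paper's projection language, closes this gap.
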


\begin{proof}
(a)  First of all, the restriction of $E$ to the unit space $G^0_{ep}$ of $\esspal$ provides a
C*-bundle
$E^0$, whose enveloping C*-algebra $C^*(E^0)$, as detailed
above in section 2, will provide the C*-subalgebra $A$ in the pair $(A,B)$ that we are
constructing, $A:=C^*(E^0)$. 
 The particular embedding of $A$ in $B$ for the Cartan pair $(A,B)$, is specified by the semidirect
product structure as in examples \ref{FB example 1} above, providing the ambient algebra $B= G_{ep}
\ltimes_{\alpha} A$. There is a unique faithful conditional expectation $P : B \to A$, given by the
restriction map $P : C^*(E) \to C^*(E^0)$. 

(b) Let $(A,B)$ be a not necessarily commutative Cartan pair with unique faithful conditional
expectation $P : B \to A$. A C*-bundle $(E^0,\pi,X)$ is constructed from the
C*-subalgebra $A$ as follows. Since $A$ is separable, (and assuming that $A$ is faithfully represented
on the associated Hilbert space $\H$), we identify $A$ with $\bigoplus^X_m \pi_m$, the direct sum over
all irreducible representations $\pi_m$ of $A$. Define the space of fibres of $E^0$ to be given by $\{
E^0_x \}_{x \in X} = \{ \pi_m \}$, where $X$ is identified with the pure state space $X(A)$ which is
locally compact in the weak *-topology. Then the enveloping algebra $C^*(E^0)$ of $E^0$ is identified
with $A$. 

To construct a Fell bundle $(E, \pi, \esspal)$ from $(A,B)$ one defines an essentially
principal groupoid $G_{ep}$ as the groupoid of germs of the locally defined homeomorphisms on $X$ (see
\eqref{partial homeo}) and the Fell bundle $E$ over $\esspal$ is given by $E = \esspal \ltimes_{\alpha}
E^0$ such that $B= C^*(E)$, with $P : C^*(E) \to C^*(E^0)$ and where the representation of $A$
extends to a faithful representation on $\H = L^2(E)$. To see that all such Cartan pairs $(A,B)$ arise
in this
way, one constructs the groupoid 2-cocycle from the interaction between $B$ and $\esspal$ in order to
specify $E$. Recall the
earlier result \ref{N(A)=InnAut}, and note that
\begin{equation}
  \ls ( A ~N_U(A)) ~~=~~ \ls (A~ \Bis(\rho_{\esspal})) 
\end{equation}
for a certain representation $\rho_{\esspal}: \esspal \to \SpatialAut(E^0)$ of $\esspal$ on
$E^0$. (In the case that $G_{ep}$ is principal $G$ we have a representation $\rho_G : G \to
GL(E^0)$ inducing a representation $\Bis(\rho_G) = \rho_U : \Bis(G) \to \SpatialAut(E^0)$.)
Since $A$ is regular in $B$, 
\begin{equation}
 B ~= ~\ls (N(A))  ~~=~ ~ \ls ( A~ \rho_{\esspal})
\end{equation}

which illustrates that the data for the representation $\rho_{\esspal}$ and the action $\alpha$ come
from the multiplication rule in $B$, and in turn the action $\alpha$ gives the mapping $\omega : \esspal
\times \esspal \to \bigoplus_{i=1}^n \mathbb{T}$ (where $n$ is the dimension of the fibre of $E^0$) and
a Fell bundle is found such that $B=C^*(E)$. Finally, note that $E^0$ is
locally trivial and the map $P : B \to A$ (as in \eqref{P nc}) gives the
restriction of enveloping algebras $P : C^*(E) \to C^*(E^0)$. 
\end{proof}

\subsubsection{Bridge theorem}

Let $(E^0,\G_{\sigma})$ be a reversible C*-bundle dynamical system (definition \ref{CBDS}). In the
following theorem we make use of several constructions set out
previously in this paper, to show that $(E^0,\G_{\sigma})$ provides an enveloping structure
 capturing non-commutative Cartan pairs together with their respective Fell bundles, where the
embedding invariant $\Phi_{\hookrightarrow}$ creates a bridge between the two
descriptions. The Fell
bundles in question are not necessarily 1-dimensional and  we
use new techniques provided by unitary representations of groups and groupoids as discussed in
\ref{spatial section}.

\medskip

We assume that in all examples, the state space $X$ of $A$ is a locally compact Hausdorff space (or
topological manifold) admitting a smooth structure.

\begin{theorem}[Bridge] \label{bridge}
(a) Let $M$ be a locally compact simply connected manifold. Each reversible C*-bundle
dynamical system, $(E^0,\G_{\sigma})$ gives rise to a locally trivial Fell bundle over a connected
principal groupoid and its uniquely associated (not necessarily
abelian) Cartan pair. (b) All such Fell bundles over connected principal groupoids $G$
together with their uniquely associated Cartan pair, arise in this way.
\end{theorem}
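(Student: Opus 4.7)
The plan is to glue together the pieces established earlier in the paper. For part (a), starting from a reversible C*-bundle dynamical system $(E^0, \G_{\sigma})$ over a locally compact simply connected manifold $M$, I would first set $A = C^*(E^0)$ and $G = M \times M$, which is a connected principal groupoid. By Definition \ref{covariance group}, $\G_{\sigma}$ is the image of a strongly continuous representation of a 1-parameter subgroup of $\Diff(M) \cong \Bis(G)$ with minimal flow. This extends to a unitary representation $\rho_U : \Bis(G) \to \SpatialAut(E^0)$ as discussed after Definition \ref{unitary rep}. Using Lemma \ref{lemma} and Theorem \ref{N(A)=InnAut}, each $U_g$ implementing $\rho_U(g)$ is a unitary normaliser of $A$, so the assignment $g \mapsto U_g$ together with the induced action $\alpha_g$ of $g \in G$ on the fibres of $E^0$ produces the semidirect product Fell bundle $E = G \ltimes E^0$ of Example \ref{FB example 1}. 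Local triviality of $E$ follows from the density of the orbit of the minimal flow: all fibres of $E^0$ are mutually isometrically $*$-isomorphic via the $\alpha_g$, and the semidirect product structure then locally trivialises $E$.

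Next, I would invoke Theorem \ref{thm esspal} applied to the principal (hence essentially principal) groupoid $G$ to obtain the uniquely associated Cartan pair $(A, B)$ with $B = C^*(E) = G \ltimes_{\alpha} A$ and unique faithful conditional expectation $P : B \to A$ given by the restriction of sections. The embedding invariant $\Phi_{\hookrightarrow}$ of Definition \ref{embedding}, generated by $\G_{\sigma}$ via the theorem at the end of Section \ref{Embeddings section} (the identity $\Phi_{\hookrightarrow} = \sum_{m} \prod_{i=1}^m U_{g_i}$), encodes the pair $(E, G)$ and its relation to $(A, B)$ simultaneously, providing the bridge named in the theorem's title.

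For part (b), I would run the construction in reverse. Given a locally trivial Fell bundle $(E, \pi, G)$ over a connected principal groupoid $G$ over $M$ with associated Cartan pair $(A, B) = (C^*(E^0), C^*(E))$, the semidirect product decomposition $E = G \ltimes E^0$ supplied by saturated Fell bundles yields an action $\alpha$ of $G$ on $E^0$ and hence a groupoid representation $\rho_G : G \to GL(E^0)$ whose bisection group gives a unitary representation $\rho_U = \Bis(\rho_G) : \Bis(G) \to \SpatialAut(E^0)$. Since $M$ is connected and carries a smooth structure, $\Diff(M) \cong \Bis(G)$ admits a 1-parameter subgroup $\{g(\lambda)\}_{\lambda \in \mathbb{R}}$ whose flow is minimal on $M$; restricting $\rho_U$ to the image of this subgroup produces the required strongly continuous 1-parameter covariance group $\G_{\sigma} \subset \SpatialAut(E^0)$ of Definition \ref{covariance group}, and one checks the two constructions are mutually inverse.

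The main obstacle is verifying that passing through $\G_{\sigma}$ loses no information, namely that the abelian 1-parameter image inside $\rho_U$ suffices to reconstruct the whole Fell bundle together with its embedding data. This is exactly where the simply connectedness of $M$ and the approximation $B = \overline{\mathrm{span}}\, N(A)$ by finite linear combinations of products of unitary normalisers $U_g$ becomes essential, so the final step should rest on the embedding invariant argument from Section \ref{Embeddings section} to guarantee that the bridge commutes in both directions.
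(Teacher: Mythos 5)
Your proposal is correct in substance and draws on the same toolbox as the paper, but it organises the argument differently. The paper's proof makes the embedding invariant $\Phi_{\hookrightarrow}$ the central object: for part (a) it treats first the discrete case, writes $\Phi_{\hookrightarrow} = \sum_{m=1}^n \prod_{i=1}^m U_{g_i}$ from the minimal covariance group exactly as in \eqref{partition}, and then reads the Cartan pair off $\Phi_{\hookrightarrow}$ by hand, approximating elements of $A$ and $B$ as finite sums $\sum a_i u_{(p_i,p_i)}$ and $\sum a_i u_{(p_i,p_j)}$, and verifying directly that $\cup_i p_i$ gives an (approximate) unit, that $P(b)=\sum_i p_i b p_i$ is the unique faithful conditional expectation, and that $\kernel P = \ls N_f(A)$. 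You instead build the Fell bundle directly as the semidirect product $E = G \ltimes E^0$ from the groupoid representation underlying $\rho_U$, and then delegate the construction of the Cartan pair and of $P$ to Theorem \ref{thm esspal}; $\Phi_{\hookrightarrow}$ enters only afterwards as an encoding device. This is a legitimate and arguably more modular decomposition --- it reuses Theorem \ref{thm esspal} rather than re-proving the diagonal-pair axioms --- whereas the paper's route makes explicit why $\Phi_{\hookrightarrow}$ deserves the name ``bridge''; for part (b) the two arguments essentially coincide (action $\alpha$ gives $\rho_G$, hence $\rho_U = \Bis(\rho_G)$, then restrict to a 1-parameter subgroup with minimal flow). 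Two caveats. First, there is a minor circularity in your part (a): you invoke Theorem \ref{N(A)=InnAut} to conclude the $U_g$ are unitary normalisers before $B=C^*(E)$ exists; the repair is to note that only the fibrewise isometric *-isomorphisms $\alpha_g$ supplied by $\rho_G$ are needed to form $G \ltimes E^0$, the normaliser property being a consequence once $B$ is defined. Second, your part (b) asserts that every connected smooth $M$ admits a 1-parameter subgroup of $\Diff(M)$ with minimal (dense-orbit) flow; this is false in general (for instance $S^2$ is simply connected yet admits no transitive flow), but the paper's own proof of (b) makes the same unproven assumption when it restricts $\rho_U$ to a 1-parameter subgroup with minimal flow, so this is a weakness shared with the paper rather than a gap specific to your argument.
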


\begin{proof}
 (a) In order to identify the Cartan pair $(A,B)$ and Fell bundle $(E, \pi, \esspal)$ that arises
from each $(E^0, \G_{ep, \sigma})$, one constructs an embedding invariant $\Phi_{\hookrightarrow}$ from
$(E^0, \G_{ep, \sigma})$ and then one reads-off all the required data from $\Phi_{\hookrightarrow}$.

 Treat first the case where $\H$ is a finite dimensional Hilbert space and $M$ is a discrete
manifold with $n$ points. Let $G$ be the pair groupoid over $M$ and let $g(\lambda_0)$
be the generating diffeomorphism of the finite product group $\{ g(\lambda) \}_{\mathbb{Z}} \subset
\Bis(\esspal)$ such that 
\begin{equation}
 \G_{\sigma} : \{ g(\lambda) \} \to \SpatialAut(E^0)
\end{equation}
Let $U_g$ denote the operators implementing the unitary representation given by
$\G_{\sigma}$. Since $\G_{\sigma}$ is minimal (over the space of fibres of
$E^0$) one writes:-
  
\begin{equation} 
\Phi_{\hookrightarrow} = \sum_{m=1}^n \prod_{i=1}^m U_{g_i}, ~~~ i=1,...,m, ~~m=1,...,n.
\end{equation}
 
 as in \eqref{partition}.

After having constructed $\Phi_{\hookrightarrow}$ for $(E^0, \G_{\sigma})$, we read-off from
$\Phi_{\hookrightarrow}$ all the required information about the C*-subalgebra $A =
C^*(E^0)$ and the way in which it embeds in the ambient C*-algebra $B=C^*(E)$ as determined by
$\alpha$, $\omega$ and $P$. More in detail, denoting $u_{(i,j)}$ by $u_{(p_i,p_j)}$, we approximate the
elements of
the algebras $A$ and $B$ from the following sets of finite sums,
\begin{gather}
 a ~=~ \{ ~~ \sum a_i  u_{(p_i,p_i)}  ~~ \vert ~~ (p_i,p_i)~\in ~(p_i,..,p_k) \times
(p_i,..,p_k) \} \in A \\
 b~ = ~\{ ~~ \sum a_i  u_{(p_i,p_j)}  ~~ \vert ~~ (p_i,p_j)~ \in ~(p_i,..,p_k) \times
(p_i,..,p_k) \} \in B
\end{gather} 
where $(p_h,.,p_n)$ is an $n$-tuple of pairwise orthogonal projections with $(p_i,..,p_k) \subset
(p_h,.,p_n)$ and where $a_i$ denote arbitrary elements of $p_i A$.

$B = \ls N(A)$ where $n \in N(A)$ is approximated by $n= \sum_{(i,j)} a_i u_{(i,j)}$ for
pairs $(p_i,p_j)$ and choices of $a_i \in p_i A$.

The set $(p_h,.,p_n)$ also provides the following.

$\cup_{p_i}^n \in A$ forms a unit or an approximate unit for $B$.

There is a unique faithful conditional expectation, $P:B \to A$,  $P(b) = \sum_i p_i b p_i \in A$ with
$\kernel P = N_f(A)$.

(b) Conversely, one shows that each Fell bundle $(E,\pi,G)$ together
with its
uniquely associated non-commutative Cartan pair $(A=C^*(E^0), B=C^*(E))$, arises as above. Present the
information specifying the action $\alpha$ and the 2-cocycle $\omega$ for $(A,B)$ and $(E,\pi,\esspal)$,
by associating an embedding invariant $\Phi_{\hookrightarrow}$ to $(A,B)$ and $(E,\pi,\esspal)$ with
restriction map $P : C^*(E) \to C^*(E^0)$. Then, from $\Phi_{\hookrightarrow}$, one
approximates a unitary representation $\rho_G : G \to GL(E^0)$ inducing $\rho_U =
\Bis(\rho_G)$, and then a
covariance group $\G_{\sigma} \subset \rho_U$ is obtained such that $\G_{\sigma}$ has minimal flow by
restricting $\rho_U$ to a 1-parameter subgroup of diffeomorphisms 
$\{ g(\lambda) \}_{\lambda \in \mathbb{R}} \in \Bis(G)$. In the case that the principal groupoid is
a pair groupoid, then $\G_{\sigma}$ should have minimal (densely transitive) flow. 
\end{proof}

\section{Pre-requisites for irreversible C*-bundle dynamical systems}

Here is some additional material that will be required in order to define the more general notion of
\emph{irreversible} C*-bundle dynamical system. This material is not
particularly new since Paterson (see the book \cite{Paterson}) already developed the theory of
partial isometry representations of inverse semigroups.

\medskip

Let $V$ be an operator on a Hilbert space $\H$. Recall that $V$ is a \emph{partial isometry}
if there exists a unique operator $V^*$ (which we call quasi-inverse) such that $VV^*V=V$ and also
$V^*VV^*=V^*$. Clearly, the invertible partial isometry operators are the isometries $U$, (satisfying
$U^*U=1$, $UU^*=1$). The set of all partial isometries $V$ on $\H$ form an inverse semigroup $\V(\H)$,
whereas the set of all unitary operators $U$ on $\H$ form a group $\U(\H)$.

\medskip

Strictly, a groupoid and an essentially principal groupoid are special cases of inverse
semigroups
$\Ss$, (a groupoid is an inverse category with a 0 element formally adjoined, in which the
quasi-inverses are true inverses.) 

\begin{definition} \label{ISL}
 Let $(E^0,\pi,X)$ be a C*-bundle over a locally compact (possibly discrete) space $X$, with enveloping
algebra $A$. We define $ISL(E^0)$ the inverse category consisting of all involutive *-endomorphisms
$\alpha_x$ between fibres of $E^0$ such that for each $\alpha_x$, there is a unique $\alpha_x^*$
satisfying, $\alpha_x \circ \alpha_x^* \circ \alpha_x = \alpha_x$ and $\alpha_x^* \circ \alpha_x \circ
\alpha_x^* = \alpha_x^*$. Call $\alpha_x^*$ the quasi-inverse of $\alpha_x$.  
\end{definition}
Note that $ISL(E^0)$ has the structure of an inverse semigroup with 0 element formally adjoined.

\begin{definition}
Let $(E^0,\pi,X)$ be a C*-bundle over a locally compact space $X$, with enveloping algebra $A$,
faithfully represented on a separable Hilbert space $\H$. 
A \emph{C*-bundle spatial *-endomorphism} consists of continuous maps $f$ and $f_0$ with
commuting diagram:
 
 \begin{equation} 
  \xymatrix{
        E^0 \ar[r]^f \ar[d]_{\pi} & E^0 \ar[d]^{\pi} \\
        X \ar[r]_{f_0} & X  }, 
 \end{equation}
 
 \begin{itemize}
  \item such that each induced fibrewise map $f_x : E^0_x \to E^0_{f_0(x)}$ is continuous,
  \item and such that $f$ extends to a *-endomorphism $\hat{f} : A \to A$ of the form  $\hat{f}(a) =
  VaV^*$ where $V$ is a partial isometry map on $\H$. 
 \end{itemize} \flushright{$\diamond$}
\end{definition}
Obviously, the invertible C*-bundle spatial *-endomorphisms are the C*-bundle spatial automorphisms
$\SpatialAut(E^0)$ (\ref{IA}). 

\medskip

The set of inner
*-endomorphisms of $E^0$ form an inverse category $\mathrm{InnEnd}_{\pi}(E^0)$, or an inverse semigroup
with 0 adjoined.

\begin{definition} \label{partial isom rep}
 A \emph{partial isometry representation} $\rho_V$ of an inverse semigroup $\Ss$ on a C*-bundle $E^0$ is
an
inverse semigroup homomorphism: $\rho_V : \Ss \to \mathrm{InnEnd}_{\pi}(E^0)$ such that $V$ is a partial
isometry on a separable Hilbert space $\H$. 
\end{definition}
Obviously, the invertible partial isometry representations $\rho_V$ are the unitary representations
$\rho_U$, \eqref{unitary rep}.

\begin{example}
 Let $(A,B)$ be a non-commutative Cartan pair such that $B$ is a crossed product algebra: $B=\Ss \ltimes
A$ where $\Ss$ is an inverse semigroup. We have already mentioned that $A$ is regular in $B$.  The
action of $\Ss$ on $A$ corresponds to a
partial isometry representation $\rho_V$ of $\Ss$ on $E^0$. This is the main family of examples
considered in \cite{non com cartan}.
\end{example}

\section{Appendix}

The following presentation of an embedding invariant $\Phi_{\hookrightarrow}$ for non-commutative Cartan
pairs of the form $B = \esspal \ltimes_{\alpha} A$, is equivalent to a groupoid 2-cocycle $\omega ~:~
\esspal \times \esspal \to \bigoplus_{i=1}^n \mathbb{T}$. This is useful as it allows one to readily
switch between  a Fell bundle specified by a 2-cocycle mapping
$\omega$ and its associated Cartan pair $(A,B)$. 

\begin{definition}[Embedding invariant, $\Phi_{\hookrightarrow}$] \label{esspal phi}
Let $(E,\pi,\esspal)$ be an orientable, (not necessarily 1-dimensional) locally trivial Fell bundle,
specified by 2-cocycle data $\omega : \esspal \times \esspal \to ISL(E^0)$.
And let $(A,B)$ be the
(possibly non-commutative) Cartan pair associated to $E$. Both $A$ and $B$
operate on a separable Hilbert space, $\H = L^2(E)$. 

Let $\{p_i \}$ be a countable
orthonormal basis for $\H$ which is dense in $\esspal^0$, given by maximal central projections
in $B$ as follows. Define simple matrix units $e_{xy} \in E_{(x,y)}$ for all $x \in X(A)$ such that the
rank of central projections $e_{xy}e_{xy}^*$ in $B$, satisfy: rank $e_{xy}e_{xy}^*$ = dim $(E^0_x)$
for each $x \in \esspal^0$. Unless $\esspal$ is principal (that is, if it has trivial isotropy) then
$e_{xy}e_{xy}^* \in \mathrm{InnAut}(E^0_x)$ is not identified with $e_{xz} e_{xz}^* \in
\mathrm{InnAut}(E_x^0)$, which explains the need for the additional index $i$.


\medskip

We have, $\esspal \cong \{ e_{xy}, e_{xy} e_{xy}^* \}$ and $\{ p_i \} = \{ e_{xy} e_{xy}^* \}$.

\medskip

In the special case that the base space of $E$ is a principal groupoid $G$, then $\kernel P = \ls
N_f(A)$ and in this case $A=C^*(E^0)$ is a C*-diagonal subalgebra of $B=C^*(E)$. More generally, the
base space of $E$ is an essentially principal groupoid $\esspal$, and then, 

\begin{equation}
 \kernel P = \ls_{n \in N_f(A)} \{ n, nn^* \}
\end{equation}

Let $u_{(i,j \neq i)} \in N_f(A)$ and $u_{(i,i)} \in A$ satisfy:
\begin{equation}
  p_k u_{(i,j)} p_l =  \begin{cases}
   u_{ij}, & \text{if $(k,l) = (i,j)$}.\\
    0, & \text{otherwise}.
  \end{cases}
\end{equation}
where each $u_{ij}$ is an isometry, $u_{ij} : \H_j \to \H_i$, where $\H_i := p_i \H$.

Given the action $\alpha$, we have an assignment,

\begin{equation}
 g \mapsto u_g = p_i u_{(i,j)} p_j.
\end{equation}
 where  $u_g$ denotes the unitaries implementing the action $\alpha$, which is in turn specified by the
2-cocycle $\omega ~:~ \esspal \times \esspal \to \bigoplus_{i=1}^n$. (Note that the mapping $g \mapsto
u_g$ does
not define a representation of $\esspal$ except in the special case of $E^0$ a trivial bundle over
$\esspal$.) Also we obtain directly from $\alpha$, a representation $\rho_{\esspal} : \esspal
\to ISL(E^0)$ and $\alpha$ also induces a representation $\rho_V : \Bis(\esspal) \to
\mathrm{InnEnd}(E^0)$.

\medskip

The \emph{embedding invariant} $\Phi_{\hookrightarrow}$ of $A$ in $B$ is then fixed by an
approximation of a maximal rank element of $B$, formed by finite linear combinations:-

\begin{equation}
\Phi_{\hookrightarrow} = \{ ~~ \sum_Y ~ p_x u_{ (p_x,p_y) } p_y  ~~ \vert ~~ p_x,p_y \in Y \subset \{
e_{xy},
e_{xy} e_{xy}^*  \} \cong \esspal ~~ \}
 \end{equation}
 for all subsets $Y \in \{ e_{xy}, e_{xy} e_{xy}^* \}$. 
 
 \medskip
 
In general, the probability measure on the pure state space of a
non-commutative algebra is not
unique. However, in our case, since $E^0$ is a
locally trivial Banach bundle, the $p_i$ are all
equivalent, and so there is only one probability measure on $X$. From this, it follows that the faithful
conditional expectation $P : B \to A$ is unique and is given by $P(b) = \int_X \alpha_g(b) dx$ as in
\ref{Examples P}, where $X = \esspal^0$ and where $dx$ is the unique probability measure on $X$ obtained
from the basis of $\H = L^2(E)$, dense in $X$, as defined above. In finite dimensions we have $P(b) =
\sum_i p_i b p_i$. Then $P$ is identified with the unique restriction map $P : C^*(E) \to C^*(E^0)$. 

\medskip

Also, $\cup_{p_i}^n \in A$ forms a unit or an
approximate unit for $B$. \flushright{$\diamond$}
\end{definition}

\section{Acknowledgements}
Many thanks to Paolo Bertozzini, Roberto Conti and Pedro Resende for their helpful
insights (alphabetical order). See title page for affiliation.


\begin{thebibliography}{99999}

\bibitem[AGN]{AGN} 
J. Aastrup, J. Grimstrup, R. Nest,
On spectral triples in quantum gravity I,
\textit{Classical and quantum gravity},
Arxiv: 0802.1783 (2008).

\bibitem[A1]{Arveson}
W. Arveson,
Generators of noncommutative dynamics
Arxiv: 0201137 (2002).

\bibitem[A2]{Arveson paths}
W. Arveson,
Path spaces, continuous tensor products and $E_0$-semigroups,
\textit{Proceedings of the NATO
Advanced Study Institute and Aegean conference,} (1996) Vol. 495 
of NATO ASI Ser. C, Math. Phys. Sci.
Kluwer Academic Publishers Dordrecht 1997 pages 1–111.

\bibitem[Bae]{qq}
J. Baez,
Quantum quandaries: a category-theoretic perspective,
\textit{The Structural Foundations of Quantum Gravity}, D. Rickles, S. French and J. T. Saatsi (Eds),
pages 240–266. Oxford University Press (2004),
Arxiv: quant-ph/0404040v2.

\bibitem[BarCr]{BC model} J.W. Barrett, L. Crane \textit{A Lorentzian signature model for quantum
general relativity}, Class. Quant. Grav. 17 p 3101-3118, (2000).


\bibitem[BeCo]{CDS Roberto}
E. B\'edos, R. Conti,
On discrete twisted C*-dynamical systems, Hilbert
C*-modules and regularity,
\textit{M\"unster J. Math.} 5, 183-208 (2012), 
Arxiv: 1104.1731.

\bibitem[BCL1]{BCL Imp} 
P. Bertozzini, R. Conti, W. Lewkeeratiyutkul, 
A Spectral Theorem for Imprimitivity $C^*$-Bimodules,
Arxiv: 0812.3596 (2008)

\bibitem[BCL2]{BCL En} 
P. Bertozzini, R. Conti R, W. Lewkeeratiyutkul,  (2009),  
Enriched Fell Bundles and Spaceoids,
(``proceedings of the 2010 RIMS thematic year on perspectives on deformation quantization and
noncommutative geometry''),  
ArXiv: 1112.5999

\bibitem[BCL3]{BCL mst}
P. Bertozzini, R. Conti, W. Lewkeeratiyutkul, 
Modular Theory, Non-Commutative Geometry and Quantum Gravity,
\textit{Special Issue ``Noncommutative Spaces and Fields'', SIGMA} 6:067, (2010) 
Arxiv: 1007.4094


\bibitem[BCL4]{BCL Cncg}
P. Bertozzini, R. Conti, W. Lewkeeratiyutkul,  
Categorical non-commutative geometry,
\textit{J. Phys.: Conf. Ser.} 346 012003 (2012).

\bibitem[B]{Rogier}
R. Bos,
Continuous representations of groupoids,
Arxiv: 0612639 (2007).

\bibitem[CPPR]{CPPR}
A. Carey, J. Phillips, I. Putnam, A. Rennie,
Type III KMS states on a class of C*-algebras containing $O_n$ and $\mathcal{Q}_{\mathbb{N}}$ and their
modular index.
\textit{Perspectives on Noncommutative geometry, American Mathematical Society, Fields Institute
Communications} ISBN 978-0-8218-4849-4 (2000).

\bibitem[C1]{essay} A. Connes, Noncommutative geometry and physics,
http://alainconnes.org/docs/einsymp.pdf

\bibitem [C2]{gravity} 
A. Connes, 
Gravity coupled with matter and the foundation of non-commutative geometry, 
\textit{Comm.  Math. Phys.} Vol.182 (1996), N.1, 155-176.


\bibitem[C3]{Connes' book} 
A. Connes, 
\textit{Noncommutative Geometry,}
\textit{Academic Press}, London, (1994).


\bibitem[CC]{sap} 
A. Chamseddine, A. Connes,
The spectral action principle.
\emph{Comm. Math. Phys.} Vol.186 (1997), N.3, 731-750.

\bibitem[CM]{walk}
A. Connes, M. Marcolli,
A walk in the non-commutative garden, (2006),
available at http://www.alainconnes.org/downloads.html

\bibitem[CS]{Roberto}
R. Conti, W. Szymanski,
Submitted to the Proceedings of the EU-NCG 4th Annual Meeting, Bucharest 2011, 
Arxiv: 1108.0860.


\bibitem[Cr1]{qg} L. Crane, 
Categorical geometry and the mathematical foundations of quantum
gravity, 
\textit{Contribution to the Cambridge University Press volume on quantum gravity} (2006).
Arxiv: gr-qc/0602120. 


\bibitem[Cr2]{category qg} 
L. Crane, 
2-d physics and 3-d topology, 
\textit{Commun. Math. Phys.} 135 (1991) 615-640.

\bibitem[Cr3]{wqg}
L. Crane,
What is the mathematical structure of quantum spacetime?
ArXiv:0706.4452 (2007).



\bibitem[D]{Dixmier} 
J. Dixmier  (1982). 
$C^*$-algebras, 
North-Holland Publishing company, English translation.

\bibitem[SD]{Dop}
S. Doplicher,
Spacetime and fields, a quantum texture,
\textit{Proceedings of the 37th Karpacz Winter School of Theoretical Physics}, (2001), 204-213
Arxiv: 0105251 (2003).

\bibitem[E]{non com cartan}
R. Exel,
Noncommutative Cartan sub-algebras of C*-algebras.
Arxiv: 0806.4143 (2008).

\bibitem[FD]{Fell Doran} 
J. Fell J, R. Doran (1998). 
Representations of $C^*$-algebras, Locally Compact Groups and Banach $*$-algebraic bundles,
vol~1-2, Academic Press.

\bibitem[GT]{GT}
K. Giesel, T. Thiemann, 
Algebraic Quantum Gravity (AQG) I. Conceptual Setup,
ArXiv:gr-qc/0607099 (2006).

\bibitem[GLR]{GLR} 
P. Ghez P,  R. Lima, J. Roberts (1985).  
W*-categories,  
\textit{Pacific~J.~Math.} \textbf{120} 11 133-159. 


\bibitem[H]{Harti} 
R. El Harti, 
The structure of a subclass of amenable Banach algebras,
\textit{International Journal of Mathematics and Mathematical Sciences},  (2004).


\bibitem[I]{Isham}
C. Isham,
Topos methods in the foundations of physics,
Arxiv: 1004.3564 (2010).


\bibitem[JL]{trans}
V. Jimenez, G.S. Lopez,
Transitive flows on manifolds,
\textit{Rev. Mat. Iberamericana} 20 (2005), 107-130.

\bibitem[KR1]{KR1}
R. Kadison, J. Ringrose, 
Automorphisms of operator algebras,
\textit{Bulletin of the American Mathematical Society} 72 (1966), no. 6, 1059--1063.

\bibitem[KR2]{KR2}
R. Kadison, J. Ringrose,
Derivations and automorphisms of operator algebras,
\textit{Comm. Math. Phys.} 4, 32-63, (1967).


\bibitem[K1]{fbg} 
A. Kumjian,  
Fell bundles over groupoids, 
Arxiv: math.oa/607230, 
\textit{Proceedings of the American mathematical society}, Vol. 126, No. 4 (Apr., 1998) pp. 1115--1125.

\bibitem[K2]{Kumjian notes}
A. Kumjian,
Notes on C*-algebras of graphs,
Available at:
http://wolfweb.unr.edu/homepage/alex/pub/survey.pdf

\bibitem[K3]{C*diag}
A. Kumjian,
On C*-diagonals,
\textit{Can. J. Math.}, Vol: XXXVIII, No.4, 1986, pp.969-1008.

\bibitem[M1]{sc} 
R. Martins,
 Categorified noncommutative manifolds,
\textit{International Journal of Modern Physics A}, Vol 24, No.15,2802-2819 (2009).
Arxiv: math.ph/0811.1485 

\bibitem[M2]{spectral C}
R. Martins,
Spectral C*-categories and Fell bundles with path-lifting.
Arxiv: 1308.5247 (2013).

\bibitem[MM]{book MM}
I. Moerdijk, J, Mrcun,
Introduction to Foliations and Lie Groupoids,
\textit{Cambridge studies in advanced mathematics, CUP} 2003.

\bibitem[Pa]{Paterson}
A. Paterson,
\textit{Groupoids, inverse semigroups and their operator algebras,}
\textit{Springer} 1999.

\bibitem[P]{fce}
R. Pereira,
Representing conditional expectations as elementary operators.
\textit{Proceedings of the American mathematical society}
Volume 134, Number 1, Pages 253–258 (2005).

\bibitem[Ra]{Raeburn}
I. Raeburn,
Dynamical systems and operator algebras,
http://maths-proceedings.anu.edu.au/CMAProcVol36/CMAProcVol36-Raeburn.pdf

\bibitem[Res]{Pedro}
P. Resende,
\'Etale groupoids and their quantales,
\textit{Adv. Math.} 208 (2007) 147-209,
Arxiv: math/0412478.

\bibitem[Ren]{Renault}
J. Renault,
Cartan subalgebras in C*-algebras
\textit{Irish Math. Soc. Bull.}
61 (2008) 29-65.
Arxiv: 0803.2284

\bibitem[Sc1]{forces} 
T. Sch\"ucker, 
Forces from Connes' geometry, 
ArXiv:hep-th/0111236.
\textit{Lect. Notes Phys}. 659:285-350 (2005).

\bibitem[Sc2]{ncg and sm} 
T. Sch\"ucker, 
Noncommutative geometry and the standard model, 
Arxiv: hep-th/0409077. 
\textit{Int. J. Mod. Phys}. A20:2471-2480 (2005).

\bibitem[St]{Christoph}
C. Stephan,
Almost-commutative geometry, massive neutrinos and the orientability axiom in KO-dimension 6,
Arxiv: hep-th/0610097 (2006).


\bibitem[T]{TomiyamaTensorProducts}
J. Tomiyama,
Tensor products of C*-algebras,
\textit{Publ. RIMS, Kyoto univ. 11 (1995) 163-183}



\end{thebibliography}
\end{document}